    \let\Cref\crtCref
    \let\cref\crtcref
\DeclareMathAlphabet{\mathpzc}{OT1}{pzc}{m}{it}
\newcommand{\cC}{\mathcal{C}}
\newcommand{\cM}{\mathcal{M}}
\newcommand{\cR}{\mathcal{R}}
\newcommand{\cS}{\mathcal{S}}
\newcommand{\cT}{\mathcal{T}}
\newcommand{\bbR}{\mathbb{R}}
\newcommand{\bbS}{\mathbb{S}}
\newcommand{\bPi}{\bm{\Pi}}
\newcommand{\bA}{\bm{A}}
\newcommand{\bC}{\bm{C}}
\newcommand{\bD}{\bm{D}}
\newcommand{\bE}{\bm{E}}
\newcommand{\bH}{\bm{H}}
\newcommand{\bI}{\bm{I}}
\newcommand{\bR}{\bm{R}}
\newcommand{\bT}{\bm{T}}
\newcommand{\bU}{\bm{U}}
\newcommand{\bV}{\bm{V}}
\newcommand{\bW}{\bm{W}}
\newcommand{\bX}{\bm{X}}
\newcommand{\bY}{\bm{Y}}
\newcommand{\bZ}{\bm{Z}}
\newcommand{\ba}{\bm{a}}
\newcommand{\bc}{\bm{c}}
\newcommand{\bp}{\bm{p}}
\newcommand{\bq}{\bm{q}}
\newcommand{\bs}{\bm{s}}
\newcommand{\bt}{\bm{t}}
\newcommand{\bu}{\bm{u}}
\newcommand{\bv}{\bm{v}}
\newcommand{\bx}{\bm{x}}
\newcommand{\by}{\bm{y}}
\newcommand{\bz}{\bm{z}}
\newcommand{\bxi}{\bm{\xi}}
\newcommand{\bzeta}{\bm{\zeta}}
\NewDocumentCommand{\norm}{mG{2}}{\big\|#1\big\|_{#2}}
\DeclareMathOperator{\diag}{diag}
\DeclareMathOperator{\SO}{SO}
\DeclareMathOperator{\SE}{SE}
\newcommand{\argmin}{\mathop{\rm argmin}}
\NewDocumentCommand{\seqp}{mG{n}}{{#1}_1-\cdots+ {#1}_{#2}}
\NewDocumentCommand{\seqm}{mG{n}}{{#1}_1-\cdots- {#1}_{#2}}
\newcommand{\myparagraph}[1]{\noindent\textbf{#1.}}
\newcommand{\rL}{\widetilde{L}}
\DeclareMathOperator{\rgrad}{\widetilde{\nabla}}
\DeclareMathOperator{\retr}{Retr}
\DeclareMathOperator{\proj}{Proj}
\DeclareMathOperator{\trace}{tr}
\newtheorem{theorem}{Theorem}
\newtheorem{prop}{Proposition}
\newtheorem{corollary}{Corollary}
\newtheorem{lemma}{Lemma}
\theoremstyle{definition}
\newtheorem{definition}{Definition}
\newtheorem{assumption}{Assumption}
\definecolor{cvprblue}{rgb}{0.21,0.49,0.74}
\definecolor{myred}{rgb}{0.82, 0.1, 0.26}
\newtheoremstyle{mystyle}
{}
{}
{}
{}
{\bf \color{myred}}
{.}
{ }
{}
\theoremstyle{mystyle}
\newtheorem{example}{Example}
\theoremstyle{remark}
\newtheorem{remark}{Remark}
\title{Block Coordinate Descent on Smooth Manifolds: Convergence Theory and Twenty-One Examples}
\author{%
  Liangzu Peng \quad \quad \quad Ren\'e Vidal \\ 
  Center for Innovation in Data Engineering and Science (IDEAS) \\ 
  University of Pennsylvania \\  
  \texttt{lpenn@seas.upenn.edu, vidalr@seas.upenn.edu}
}
\begin{document}

\maketitle

\begin{abstract}
    Block coordinate descent is an optimization paradigm that iteratively updates one block of variables at a time, making it quite amenable to big data applications due to its scalability and performance. Its convergence behavior has been extensively studied in the (block-wise) convex case, but it is much less explored in the non-convex case. In this paper we analyze the convergence of block coordinate methods on non-convex sets and derive convergence rates on smooth manifolds under natural or weaker assumptions than prior work. Our analysis applies to many non-convex problems, including ones that seek low-dimensional structures (e.g., maximal coding rate reduction, neural collapse, reverse engineering adversarial attacks, generalized PCA, alternating projection); ones that seek combinatorial structures (homomorphic sensing, regression without correspondences, real phase retrieval, robust point matching); ones that seek geometric structures from visual data (e.g., essential matrix estimation, absolute pose estimation); and ones that seek inliers sparsely hidden in a large number of outliers (e.g., outlier-robust estimation via iteratively-reweighted least-squares). While our convergence theory applies to all these problems, yielding novel corollaries, it also applies to other, perhaps more familiar, problems (e.g., optimal transport, matrix factorization, Burer-Monteiro factorization), recovering previously known results. 
\end{abstract}

\section{Introduction}
In this paper, we consider optimization problems of the form
\begin{equation}\label{eq:obj}
    \min_{\bx \in \cM} F(\bx) \ \ \ \ \ \ \ \ \ \  \ \ \ \ \  \cM:=\cM_1\times \cdots \times \cM_b.
\end{equation}
Throughout the paper, we assume $\cM_i$ is a non-empty subset of $\bbR^{n_i}$ and $F$ is differentiable. Since $\cM$ and $F$ are allowed to be non-convex, \cref{eq:obj} is a rather general formulation. Structural assumptions on $\cM$ (e.g., smoothness as a manifold and closedness) will be imposed later when suitable. The product structure of $\cM$ prompts the algorithmic philosophy of \textit{block coordinate methods}: update one block of variables $\bx_i\in\cM_i$ at a time. This is formally summarized in the listing below (\cref{algo:BEM}).


\begin{algorithm}[h]
    \SetAlgoLined
    \DontPrintSemicolon
    Initialization: $\bx^0=[\bx^{0}_{1}; \dots; \bx^{0}_b]\in \cM$ with $\| \bx^0 \|<\infty$ and each $\bx_i^0\in \cM_i$\\
    For $t\gets 0,1,\dots, T$: \\
        \quad \quad For $i\gets1,\dots,b$: \\
        \quad \quad \quad \quad \quad \quad \quad \quad \quad \quad  Update $\bx_i^{t+1}$ from $\bx^{t+1}_1,\dots, \bx^{t+1}_{i-1}, \bx^{t}_{i+1},\dots, \bx^{t}_b$
	\caption{Block Coordinate Descent on Smooth Manifolds} 
 \label{algo:BEM}
\end{algorithm}
\cref{algo:BEM} does not specify how to update $\bx_i^{t+1}$, while in fact we consider three updating rules in the paper:
\begin{align}
            & \bx_i^{t+1} \in  \argmin_{\bxi\in \cM_i} F( \bx^{t+1}_1,\dots, \bx^{t+1}_{i-1}, \bxi, \bx^{t}_{i+1},\dots, \bx^{t}_b ) \label{eq:BEM-i} \\ 
            & \bx_i^{t+1} \in \argmin_{\bxi\in \cM_i} G_i( \bxi, \bq_i^t ), \ \ \ \textnormal{where\ } \bq_i^t:=[\bx^{t+1}_{1};\dots; \bx^{t+1}_{i-1}; \bx^t_{i}; \bx^t_{i+1} ;\dots;\bx^t_{b}] \label{eq:BMM-i} \\
            & 
            \bx_i^{t+1} \gets \retr_{\bx^t_i}(\bs^t_i), \textnormal{\ \ where $\bs^t_i=-\lambda^t_i \cdot \rgrad_i F(\bx^{t+1}_{1},\dots, \bx^{t+1}_{i-1}, \bx^t_{i}, \bx^t_{i+1} ,\dots,\bx^t_{b})$ } \label{eq:BRGD-i}
\end{align}
Rule \cref{eq:BEM-i} performs \textit{block exact minimization}, minimizing the objective $F$ on a block of variables (\cref{subsection:BEM}). Rule  \cref{eq:BMM-i} is a \textit{block majorization minimization} step, which minimizes some upper bound function $G_i$ of $F$ on block $i$ (\cref{subsection:BMM}). Rule \cref{eq:BRGD-i} is a \textit{block Riemannian gradient descent}, which makes a descent step using the retraction $\retr_{\bx^t_i}(\cdot)$ and Riemannian gradient $\rgrad_i F(\cdot)$ of the objective  with respect to $\bx_i$ (\cref{subsection:BRGD}). Finally, it is possible to blend these rules and still obtain a meaningful descent algorithm (\cref{subsection:BEM+BRGD}).

In \cref{section:examples}, we present some examples that motivate studying \cref{algo:BEM}. In \cref{section:prior-art}, we review prior works on block coordinate descent in the optimization context. \cref{section:preliminary,section:convergence,section:convergence-rate} consists of expositions to our theorems, whose proofs are in the appendix. In \cref{section:concolusion} we conclude the paper.


\section{Examples}\label{section:examples}
Here we contribute by exhibiting a diverse and long list of example problems, either classic or very recent, to which \cref{algo:BEM} applies. These examples are rarely spelled out in the context of block coordinate descent\footnote{Other examples (e.g., matrix factorization, Burer-Monteiro factorization) appear more familiar and in fact have been analyzed in the context of \cref{algo:BEM}, and we will discuss them in the later sections.}, and existing theory is limited in, if not incapable of, asserting the convergence behavior of \cref{algo:BEM} applied to these examples. This motivates us to develop a unifying, general framework for analyzing \cref{algo:BEM}, which eventually generates convergence (rate) guarantees for all of these example problems.

We categorize the examples into four subsections pertaining to four different themes. In \cref{subsection:low-dim} we show examples related to promoting low-dimensional structures from data. In \cref{subsection:HS}, we show examples related to a family of estimation problems with combinatorial structures. In \cref{subsection:GV}, we show examples related to estimating geometric structures from visual data. In \cref{subsection:RE}, we show a class of estimation problems in the presence of outlier data, and these can be viewed also as sparsity-promoting examples (where sparsity arises from the patterns of the inlier residuals).

\subsection{Discovering Low-Dimensional Structures via Block Coordinate Descent}\label{subsection:low-dim}

\begin{example}[Maximal Coding Rate Reduction, MCR$^2$]\label{example:MCR2}
Let $\cM_1\subset \bbR^{d\times n}$ and $\cM_2 \subset \bbR^{n\times k}$ be two compact submanifolds. For each $\bPi\in \cM_2$, denote by $\bPi_i$ the $i$-th column of $\bPi$. With some $\epsilon>0$ and $\bm{1}\in\bbR^n$ denoting the vector of all ones, MCR$^2$ refers to the following objective:\footnote{In practice, $\cM_1$ would be the set of matrices whose columns are unit vectors (or the set of matrices whose Frobenius norm is $1$), and $\cM_2$ would be a manifold defined by quadratic equalities or doubly stochastic matrices. These details are important in practice but irrelevant to our exposition. }
\begin{equation}\label{eq:MCR2}
    \begin{split}
        \min_{\bZ\in \cM_1, \bPi\in \cM_2} \ &-\log\det\Big( \bI_d + \frac{d}{n \epsilon^2} \bZ \bZ^\top \Big) \\ 
    &+ \sum_{i=1}^k \frac{\bPi_i^\top \bm{1}}{n} \log \det \Big( \bI_d + \frac{d}{\epsilon^2\cdot \bPi_i^\top \bm{1}} \bZ \diag(\bPi_i) \bZ^\top \Big) 
    \end{split} \tag{MCR$^2$}
\end{equation}
\textbf{\cref{eq:MCR2} pursues low-dimensional structures} as it was shown in \cite{Chan-JMLR2022} that, under some conditions, any global minimizer of \cref{eq:MCR2} exhibits certain (\textit{within class}) low-rank properties. Prior works combined \cref{eq:MCR2} with deep networks 
\cite{Yu-NeurIPS2020,Chan-JMLR2022,Li-arXiv2022,Ding-ICCV2023}, and here we consider \cref{eq:MCR2} through a pure optimization lens. While exact minimization with respect to either $\bPi$ or $\bZ$ is difficult, the Riemannian gradient descent option \cref{eq:BRGD-i} is implementable, making \cref{algo:BEM} applicable. An important special case is when $\bPi$ is given. In this situation, \cref{eq:MCR2} can be reformulated so that it is again amenable to alternating minimization \cite{Baek-CVPR2022}. The reformulated objective is shown in \cite[Eq. 7]{Baek-CVPR2022}, where the two variables $\bm{\Gamma}$ and $\bA$ lie respectively in two compact submanifolds of the Euclidean space. 
Moreover, again, \cref{algo:BEM} can be applied. Therefore, our analysis of \cref{algo:BEM} applied to the general formulation \cref{eq:obj} will generate novel guarantees for \eqref{eq:MCR2} or its reformulation in \cite{Baek-CVPR2022}.



\end{example}

\begin{example}[Neural Collapse]\label{example:NC}
    In a recent line of research termed \textit{neural collapse} \cite{Papyan-PNAS2020,Lu-arXiv2020v2,Fang-PNAS2021,Zhu-NeurIPS2021,Ji-ICLR2022,Han-ICLR2022,Zhou-ICML2022,Yaras-NeurIPS2022,Zhou-NeurIPS2022}, it has been of interest to study the problem of the form (see, e.g., \cite[Eq. 4]{Yaras-NeurIPS2022} and \cite[Theorem 1]{Lu-arXiv2020v2})
    \begin{equation}\label{eq:NC}
        \left( \widehat{\bW}, \widehat{\bH} \right)\in \argmin_{\bW,\bH} F(\bW^\top\bH) \ \ \ \ \ \ \ \textnormal{s.t.} \ \ \ \ \ \ \ \bW\in \cM_1, \bH\in \cM_2, \tag{NC}
    \end{equation}
    where $\cM_1$ and $\cM_2$ are compact submanifolds of the Euclidean space, and $F$ is \textit{block-wise Lipschitz smooth} (i.e., the gradients of $F$ with respect to each block is Lipschitz continuous).\footnote{In \cite{Yaras-NeurIPS2022}, $\cM_1$ and $\cM_2$ are both the set of matrices whose columns are unit vectors and $F$ is related to the soft-max cross-entropy loss and is indeed block-wise Lipschitz smooth; other losses discussed in \cite{Zhou-NeurIPS2022}, such as \textit{mean squared error}, is also block-wise Lipschitz smooth. A slightly more general situation considered in \cite{Fang-PNAS2021} is where $\cM_1$ and $\cM_2$ are defined by some Euclidean balls. To summarize, \cref{eq:NC} generalizes their settings. }
    
    \textbf{\cref{eq:NC} pursues low-dimensional structures} as it was shown in prior works that, under certain conditions, any global minimizer $\widehat{\bH}$ of \eqref{eq:NC} exhibits certain (\textit{within class}) low-rank properties called \textit{neural collapse} (see, e.g., \cite[Theorem 1]{Yaras-NeurIPS2022}). In some sense, \cref{eq:NC} can be viewed as a specific case of the matrix factorization model (see, e.g., \cite[Eq. 1]{Haeffele-TPAMI2019} and \cite[Eq. 1.3]{Olikier-arXiv2023}).  \cref{algo:BEM} can be used to minimize \cref{eq:NC} in an alternating fashion. We will see that our theorems of \cref{algo:BEM} are applicable to \cref{eq:NC}, guaranteeing the convergence to a stationary point; and combining this with \cite[Theorem 2]{Yaras-NeurIPS2022} yields guarantees of finding global minimizers.
\end{example}
\begin{example}[Reverse Engineering $\ell_p$ Adversarial Attacks]\label{example:REAA}
    In image classification, \textit{adversarial attacks} refer to devising imperceptible perturbations added to an input image so that the classification result of a given algorithm is made entirely wrong \cite{Pal-NeurIPS2020,Pal-TMLR2023}. Typically, adversarial attacks involve some additive noise with bounded $\ell_p$ norm, where $p$ could be equal to $1$, $2$, or $\infty$, and its values are determined by the attacker. To design classification methods that are robust to such attacks, it is important to estimate the precise values of $p$. This estimation task is called \textit{reverse engineering $\ell_p$ adversarial attacks}, and we refer the reader to \cite{Thaker-ICML2022} for background and motivations on that topic. What interests us in that context is the optimization formulation shown in \cite[Eq. 11]{Thaker-ICML2022} and proposed to reverse engineer the types of adversarial attacks:
    \begin{equation}\label{eq:REAA}
        \min_{\bc_s, \bc_a} \| \bx - \bD_s \bc_s - \bD_a \bc_a \|^2 + R_s(\bc_s) + R_a(\bc_a)
    \end{equation}
    While the first term $\| \cdot \|^2$ in  \cref{eq:REAA} asks for data fidelity, \textbf{the regularizers $R_s(\cdot)$ and $R_a(\cdot)$ promote low-dimensional structures} as they are $\ell_1$-style losses often used to enforce sparsity; see \cite[Eq. 11]{Thaker-ICML2022} for concrete forms of $R_s(\cdot)$ and $R_a(\cdot)$. Then, block coordinate descent can be applied to update $\bc_s$ and $\bc_a$ in an alternating manner, While \cref{eq:REAA} is actually a convex and unconstrained problem (in the context of \cite{Thaker-ICML2022} or provided that $R_s(\cdot)$ and $R_a(\cdot)$ are convex), we will show how to obtain valid convergence guarantees for \cref{algo:BEM} applied to a more general case where the optimization variables $\bc_s$ and $\bc_a$ have non-convex or manifold constraints.
    
\end{example}
\begin{example}[Generalized Principal Component Analysis, GPCA]
    The classic GPCA approach \cite{Vidal-2003thesis,Vidal-TPAMI2005,Vidal-GPCA2016,Tsakiris-SIAM-J-IS2017} is related to the also classic \textit{subspace clustering} problem \cite{Ma-SIAM-Review2008,Vidal-SPM2011}, with the goal of \textbf{discovering low-dimensional structures (e.g., a union of low-dimensional subspaces) from data}. Specifically, we are given a set of $m$ points $\{ \bp_j \}_{j=1}^m$ lying in a union of $b$ subspaces $\cup_{i=1}^b \cS_i\subset \bbR^{D}$ and we need to recover each subspace $\cS_i$ from data $\{ \bp_j \}_{j=1}^m$. Note that $\cS_i$ is identified uniquely with its orthogonal complement $\cS^\perp_i$, and therefore uniquely (up to isometry) with an orthonormal basis $\bA_i^*$ of $\cS^\perp_i$. Therefore, we will focus on recovering $\bA_i^*$ next. 

To simplify exposition, we assume $b=2$, i.e., there are only two subspaces $\cS_1$ and $\cS_2$ to recover. This is without loss of generality, and a parallel discussion for arbitrary $b$ can be found in the appendix.

A point $\bp$ belongs to $\cS_1\cup \cS_2$ if and only if $\| \bp^\top \bA_1^* \| \cdot \| \bp^\top \bA_2^* \| =0$. Then, given points   $\{ \bp_j \}_{j=1}^m$, we can consider the following \textit{least-squares} formulation of GPCA (cf. \cite[Eq. 3.87]{Vidal-2003thesis}):
\begin{equation}\label{eq:GPCA-LS}
    \min_{ \bA_1,\bA_2 } \sum_{j=1}^m \| \bp_j^\top \bA_1 \|^2 \cdot \| \bp_j^\top \bA_2 \|^2 \ \ \ \ \ \textnormal{s.t.} \ \ \ \   \bA_1^\top \bA_1  = \bI_{c_1}, \ \bA_2^\top \bA_2  = \bI_{c_2}. \tag{GPCA-LS}
\end{equation}
Here, $c_i$ denotes the dimension of $\cS_i^\perp$ and $\bI_{c_i}$ denotes the $c_i\times c_i$ identity matrix ($i=1,2$). 

We find that \cref{eq:GPCA-LS} is amenable to block exact minimization (\cref{algo:BEM}). Indeed, we can start with initialization $\bA_1^0$ and $\bA_2^0$, and contextualize the block minimization step \cref{eq:BEM-i} as
\begin{equation}\label{eq:BEM-i-GPCA}
    \bA_i^{t+1} \in \argmin_{\bxi} \sum_{j=1}^m w_{i,j}^t\cdot  \| \bp_j^\top \bxi \|^2  \ \ \ \ \ \textnormal{s.t.} \ \ \ \   \bxi^\top \bxi  = \bI_{c_i}  \ \ \ (i=1,2)
\end{equation}
where the weights $w_{1,j}^t$ and $w_{2,j}^t$ are defined to be
\begin{equation}\label{eq:GPCA-LS-weight}
    w_{1,j}^t:= \| \bp_j^\top \bA_{2}^t \|^2, \ \ \ \ w_{2,j}^t  := \| \bp_j^\top \bA_{1}^{t+1} \|^2.
\end{equation}
One can solve \cref{eq:BEM-i-GPCA} easily: The iterate $\bA_i^{t+1}$ of \cref{eq:BEM-i-GPCA} is given as the matrix whose columns are eigenvectors of the weighted covariance matrix $\sum_{j=1}^m w_{i,j}^t\cdot \bp_j \bp_j^\top$ corresponding to its smallest $c_i$  eigenvalues.

The above setup is ideal in the sense that \textit{all} points $\bp_j$'s lie in the subspace union $\cS_1\cup \cS_2$. In a more challenging situation, there could be \textit{some} ``\textit{outlier}'' points $\bp_j$ that are \textit{far} from $\cS_1\cup \cS_2$. Since formulation \cref{eq:GPCA-LS} is of the least-squares type, it can be sensitive to such outliers. To alleviate the influence of outliers, a typical choice is to consider the following Huber-style formulation:
\begin{equation}\label{eq:GPCA-Huber}
    \min_{ \bA_1,\bA_2 } \sum_{j=1}^m h\big(\| \bp_j^\top \bA_1 \| \big) \cdot h\big( \| \bp_j^\top \bA_2 \| \big) \ \ \ \ \ \textnormal{s.t.} \ \ \ \   \bA_1^\top \bA_1  = \bI_{c_1}, \ \bA_2^\top \bA_2  = \bI_{c_2} \tag{GPCA-Huber}
\end{equation}
Here $h(\cdot)$ is a (rescaled and translated) Huber loss defined as (with some parameter $\epsilon>0$)
\begin{equation}\label{eq:Huber}
    h(r):= \begin{cases}
        |r| & |r|> \epsilon \\
       \frac{r^2 + \epsilon^2 }{2\epsilon} & |r| \leq \epsilon
    \end{cases} \tag{Huber}
\end{equation}
Again, \cref{algo:BEM} can be applied to \cref{eq:GPCA-Huber}. If one  insists on exact minimization \cref{eq:BEM-i}, then one would attempt to solve, for $i=1,2$, the optimization problem
\begin{equation}\label{eq:BEM-i-GPCA-Huber}
    \bA_i^{t+1} \in \argmin_{\bxi} \sum_{j=1}^m v_{i,j}^t\cdot  h(\| \bp_j^\top \bxi \|)  \ \ \ \ \ \textnormal{s.t.} \ \ \ \   \bxi^\top \bxi  = \bI_{c_i},
\end{equation}
where weights $v_{1,j}^t$ and $v_{2,j}^t$ are defined as $v_{1,j}^t:= h\big( \| \bp_j^\top \bA_{2}^t \| \big)$ and $ v_{2,j}^t  := h\big( \| \bp_j^\top \bA_{1}^{t+1} \| \big)$ respectively, similarly to \cref{eq:GPCA-LS-weight}. While \cref{eq:BEM-i-GPCA-Huber} does not have a closed-form solution, we will show how the majorization minimization option \cref{eq:BMM-i} can address this issue in a more general context in \cref{subsection:BMM}.
\end{example}

\begin{example}[Alternating Projection]\label{example:alt-proj}
    In \cite{Zhu-arXiv2018b} the following problem was considered:   
    \begin{equation}\label{eq:minimum-dist}
        \min_{\bx_1\in \cM_1,\bx_2\in \cM_2} F(\bx_1,\bx_2), \ \ \ \ \ F(\bx_1,\bx_2):=\|\bx_1 - \bx_2 \|^2 \tag{AP}
    \end{equation}
    \textbf{\cref{eq:minimum-dist} pursues low-dimensional structures} as it aims to compute a point in the intersection $\cM_1\cap \cM_2$ of two sets or low-dimensional manifolds (assuming $\cM_1\cap \cM_2 \neq \varnothing$). \cref{algo:BEM} with rule \cref{eq:BEM-i} is applicable to \cref{eq:minimum-dist}, and in fact the convergence and convergence rate of this method was studied in \cite{Zhu-arXiv2018b}. However, their proof involves non-smooth analysis techniques and assumes the so-called \textit{Kurdyka-Lojasiewicz property}, which might be hard to verify. In this paper, we will show how to obtain more informative rates via simpler analysis.
\end{example}

\subsection{Discovering Combinatorial Structures via Block Coordinate Descent}\label{subsection:HS}
\begin{example}[Homomorphic Sensing]\label{example:HS}
    The \textit{homomorphic sensing} problem \cite{Tsakiris-ICML2019,Peng-ICML2021,Peng-ACHA2021} is related to solving
    \begin{equation}\label{eq:HS}
        \min_{\bx\in\bbR^n,\bT\in \cT} \|\by - \bT \bA \bx \|^2,  \tag{HS}
    \end{equation}
    Here, $\cT$ is assumed to be a finite set of matrices, adding some combinatorial flavor to \cref{eq:HS}. More concretely, $\cT$ can be the set of permutation matrices, and then \cref{eq:HS} models the problem of \textit{linear regression without correspondences} \cite{Unnikrishnan-Allerton2015,Unnikrishnan-TIT18,Pananjady-TIT18,Slawski-JoS19,Tsakiris-TIT2020,Peng-SPL2020,Slawski-JCGS2021,Onaran-arXiv2022,Mazumder-MP2023}; or $\cT$ can be the set of \textit{sign matrices}, that is the set of diagonal matrices whose diagonal elements are either $1$ or $-1$, and then \cref{eq:HS} corresponds to the problem of \textit{real phase retrieval} \cite{Balan-ACHA2006} or \textit{symmetric mixture of two linear regressions} \cite{Balakrishnan-AoS2017,Klusowski-TIT2019}. In both cases, $\cT$ is a compact submanifold of the Euclidean space.  See \cite[Section 4]{Peng-ICML2021} and \cite[Section 1.2]{Peng-ACHA2021} for more relevant examples and discussions.
    
      
 To solve \cref{eq:HS}, it is natural to consider minimizing $\bx$ and $\bT$ in an alternating style, an instance of \cref{algo:BEM} combined with block exact minimization \cref{eq:BEM-i}. Such alternating minimization algorithm was proposed respectively in \cite{Abid-Allerton2018} with no convergence guarantees for the case of linear regression without correspondences and in \cite{Netrapalli-NeurIPS2013} with convergence proofs for the case of (real) phase retrieval. We will show how to prove general theorems that are applicable to both cases, yielding novel guarantees for the former and different guarantees than \cite{Netrapalli-NeurIPS2013} for the latter. Our theory will furthermore apply to the situation where $\cT$ is the set of matrices whose elements are compositions of permutation and sign matrices (see \cite{Lv-ACM2018} and \cite[Section 4]{Peng-ICML2021}).
\end{example}

\begin{example}[Sparse Homomorphic Sensing]
    One could combine \cref{eq:HS} with a sparsity-promoting regularizer $R(\bx)$, and arrive at the following problem (see \cite{Peng-ICML2021} for a related formulation):
    \begin{equation}\label{eq:SHS}
        \min_{\bx\in\bbR^n,\bT\in \cT} \|\by - \bT \bA \bx \|^2 + R(\bx)  \tag{SHS}
    \end{equation}
    Again, the idea of block coordinate descent is still valid, and our theory will apply.
\end{example}

\begin{example}[Robust Point Matching, RPM] The classic RPM problem \cite{Chui-CVIU03} aims to align two point sets $\{\by_1,\dots,\by_m\}$ and $\{ \bx_1,\dots,\bx_n \}$ via some affine transformation $\bA$. The RPM problem is formulated as
    \begin{equation}\label{eq:RPM}
        \min_{ w_{ij}\in\{0, 1 \} , \bA } \sum_{i=1}^m \sum_{j=1}^n w_{ij} \cdot \| \by_i - \bA \bx_j  \|^2 \quad \textnormal{s.t.} \quad \sum_{i=1}^m w_{ij} \leq 1 (\forall j), \quad  \sum_{i=1}^m \sum_{j=1}^n w_{ij} = k \tag{RPM}
    \end{equation}
Here $\{w_{ij}\}_{ij}$ forms a binary matrix that encodes the correspondences between $\by_i$ and $\bx_j$. The constraint $\sum_{i=1}^m w_{ij} \leq 1$ ($\forall j$) ensures there is only one $\by_i$ corresponding to a given $\bx_j$, and constraint $\sum_{i=1}^m \sum_{j=1}^n w_{ij} = k$ ensures there are (at most) $k$ correspondences. Given $w_{ij}$, \cref{eq:RPM} becomes a least-squares problem, while given $\bA$, \cref{eq:RPM} becomes an instance of the $k$-cardinality assignment problem \cite{Dell-DAM1997}. Both subproblems can be solved in polynomial time, so \cref{algo:BEM} with block exact minimization applies, and so will our theory.
\end{example}

    

\subsection{Discovering Geometric Structures via Block Coordinate Descent}\label{subsection:GV}
In the field called \textit{geometric computer vision}, one aims to discover \textit{geometric} structures (e.g., rotation and translation between two cameras), from \textit{visual} data (e.g., images and point clouds); see, e.g., textbooks \cite{Hartley-2004,Ma-book2004}. Given below are a few examples from geometric vision to which block coordinate descent applies. 
\begin{example}[Essential Matrix Estimation]\label{example:EME}
    Denote by $\SO(3)$ the set of $3\times 3$ rotation matrices, that is $\SO(3):=\{\bR\in \bbR^{3\times 3}: \bR^\top \bR = \bI_3, \det(\bR)=1  \}$. Denote by $\bbS^2$ the unit sphere of $\bbR^3$, that is $\bbS^2:=\{ \bt\in \bbR^3: \|\bt \|=1 \}$. With $\bt= [t_1;t_2;t_3]\in \bbR^3$, denote by $[\bt]_\times$ the $3\times 3$ matrix representing cross product, that is
    \begin{equation*}
        [\bt]_\times := \begin{bmatrix}
    0 & -t_3 & t_2 \\ t_3 & 0 & -t_1 \\ -t_2 & t_1 & 0 
\end{bmatrix}.
    \end{equation*}
    An essential matrix $\bE$ is defined as a $3\times 3$ matrix of the form $[\bt]_{\times} \bR$, where $\bt$ is a unit vector and $\bR$ a rotation. Then \textit{essential matrix estimation} can be formulated as the following optimization problem: 
    \begin{equation}\label{eq:EME}
        \min_{ \bE\in \cM }  \sum_{i=1}^m \left(\bx_i^\top \bE \by_i \right)^2 \quad \textnormal{s.t.} \quad \cM:= \left\{ [\bt]_{\times} \bR:  \bt\in \bbS^2, \bR\in \SO(3)   \right\}   \tag{E}
    \end{equation}
    While in \cite{Zhao-TPAMI2020} it was shown that \cref{eq:EME} can be relaxed into a convex semidefinite program, one can tackle \cref{eq:EME} via block coordinate descent as well, that is to alternate between minimizing \eqref{eq:EME} in variables $\bR$ and $\bt$.
\end{example}
\begin{example}[Generalized Essential Matrix Estimation]
    A generalization of \cref{example:EME} was first considered in \cite{Pless-CVPR2003}, which motivates the following formulation (see also \cite{Li-CVPR2008,Campos-ICRA2019}):
    \begin{equation}\label{eq:GEME}
            \min_{ \bE\in \cM, \bR \in \SO(3) }  \sum_{i=1}^m \left(\bx_i^\top \begin{bmatrix}
                \bE & \bR \\ 
                \bR & 0
            \end{bmatrix} \by_i \right)^2 \quad \textnormal{s.t.} \quad \cM:= \left\{ [\bt]_{\times} \bR:  \bt\in \bbS^2, \bR\in \SO(3)   \right\}   \tag{GE}
    \end{equation}
    Similarly to \cref{example:EME}, and as observed in \cite{Campos-ICRA2019}, \cref{algo:BEM} applies, and it also applies to other geometric vision problems (e.g., \textit{absolute pose estimation}); see \cite{Campos-ICRA2019} for details. We will see our theorems apply, too.
\end{example}

\subsection{Discovering Inliers from Outlier Data via Block Coordinate Descent}\label{subsection:RE}

\begin{example}[IRLS for Outlier-Robust Estimation]\label{example:IRLS}
    Consider the following pair of problems:

\begin{minipage}{.49\textwidth}
  \begin{equation}\label{eq:robust}
      \min_{\bx \in \cC }  \sum_{i=1}^m \rho \big(r_i(\bx) \big) 
  \end{equation}
\end{minipage}%
\begin{minipage}{.49\textwidth}
  \begin{equation}\label{eq:weight+x}
      \min_{\bx \in \cC, w_i\in[0,1] }  \sum_{i=1}^m \Big( w_i\cdot r_i^2(\bx) + R_\rho(w_i) \Big)
  \end{equation}
\end{minipage}

In \cref{eq:robust}, the constraint set $\cC$ is an embedded manifold of the Euclidean space, $r_i(\cdot)$ is some residual function, and $\rho:\bbR\to\bbR$ is some robust loss to alleviate the influence of outliers. 
As shown in \cite[Section 5]{Black-IJCV1996}, under certain conditions on the outlier-robust loss $\rho$, \cref{eq:robust} reduces to problem \cref{eq:weight+x}, which further contains some weight variables $w_i$'s and a regularization term $R_\rho(w_i)$ on each weight $w_i$; the exact expression of $R_\rho(w_i)$ depends on $\rho$. Note that \cref{eq:robust} can be specialized in many ways, as there are more than \textbf{forty} different robust losses  $\rho(\cdot)$ and more than \textbf{ten} different residuals  $r_i(\cdot)$ occurring in practice; see, for example, \cite[Appendix C]{De-CCE2021}, \cite[Table 2]{Chatterjee-TPAMI2017}, \cite[Appendix B]{Peng-CVPR2023}, and \cite{Black-IJCV1996,Zach-BMVC2017,Rahimi-arXiv2022v2}.

Note that \cref{eq:weight+x} is amenable to alternating minimization: Minimizing \cref{eq:weight+x} in $\bx$ is a (constrained) weighted least-squares problem as $R_\rho(w_i)$ does not depend on $\bx$, and minimizing \cref{eq:weight+x} in $w_i$ typically has a closed-form solution. This is essentially the classic IRLS algorithm in its very general form (see, e.g.,  \cite{Coleman-TOMS1980,Ochs-SIAM-J-IS2015,Aftab-WCACV2015}). 
\end{example}

Outlier-robust estimation and iteratively reweighted least-squares (IRLS) are important subjects with a vast literature. Our purpose here is not to provide a complete picture of the two subjects. Instead, we proceed by presenting two examples of $\rho(\cdot)$ and one example of $r_i(\cdot)$ that will be useful for the sequel.

\begin{example}[Geman-McClure Loss \cite{Geman-1985}]\label{example:GM}
    For the Geman-McClure loss
    \begin{equation}
        \rho(r) = \frac{r^2}{1+r^2}, \tag{GM}\label{eq:GM}
    \end{equation}
    the regularizer $R_\rho(w_i)=(\sqrt{w_i} - 1)^2$ makes the two problems \cref{eq:robust} and \cref{eq:weight+x} equivalent in the sense of \cite{Black-IJCV1996} (see \cite[Section 5.2.1 and Figure 28]{Black-IJCV1996}). By checking the second-order derivative, one shows $R_\rho(\cdot)$, and therefore \cref{eq:weight+x}, is strictly convex in $w_i$. Hence, given $\bx$, minimizing \cref{eq:weight+x} in variable $w_i$ can be solved in closed form: Simply check the objective values at the two endpoints $0,1$ and the stationary point where the derivative of the objective with respect to $w_i$ is $0$. In fact, this stationary point is of the form $\frac{1}{(r^2_i(\bx) + 1)^2}$, always lying in $(0,1]$, 
    so it is indeed the global minimizer of \cref{eq:weight+x} given $\bx$. This also suggests the constraint $w_i\in [0,1]$ plays no algorithmic role for IRLS to be applied, but $[0,1]$ is a compact subset of $\bbR$, which will benefit analysis.
\end{example}
Next, we give an example where IRLS minimizes an objective slightly different than that of \cref{eq:weight+x}:
\begin{example}[Smoothed $\ell_1$ Loss]\label{example:smooth-L1}
    With some parameter $\epsilon>0$, consider the following formulation:
    \begin{equation}\label{eq:specific-IRLS}
        \min_{\bx \in \cC, w_i\in (0,\infty) }  \sum_{i=1}^m \Big( w_i\cdot r_i^2(\bx) + \epsilon^2 \cdot w_i + \frac{1}{w_i} \Big)
    \end{equation}
    This formulation was considered in \cite{Daubechies-CPAM2010} in the context of compressed sensing and also analyzed in \cite{Beck-SIAMOpt2015} in the convex case with specific residual functions $r_i(\cdot)$. Similarly to \cref{example:GM}, the objective of \cref{eq:specific-IRLS}  is also strictly convex in $w_i$, therefore the optimal weight given $\bx$ is
    \begin{equation}\label{eq:specific-IRLS-weight}
        w_i = \frac{1}{\sqrt{r_i^2(\bx) + \epsilon^2}}.
    \end{equation}
    and this is how the IRLS algorithm updates weight $w_i$. Finally, substitute $w_i$ of \cref{eq:specific-IRLS-weight} into \cref{eq:specific-IRLS}, and we obtain
    \begin{equation}
        \min_{\bx \in \cC }  \sum_{i=1}^m \sqrt{r_i^2(\bx) + \epsilon^2}. 
    \end{equation}
    This is equivalent to the outlier-robust estimation problem \cref{eq:robust} with loss $\rho$ set to be $\rho(r):= \sqrt{r^2 + \epsilon^2 }$. (Smoothed losses are also considered in matrix rank minimization problems, see, e.g., \cite{Kramer-arXiv2021,Kummerle-ICML2021}.)
\end{example}
\begin{example}[Point Cloud Registration]\label{example:PCR}
    We now specialize \cref{eq:robust} and \cref{eq:weight+x} for the problem of point cloud registration \cite{Horn-JOSAA1988}. In this problem, the residual function $r_i(\cdot)$ involves two variables: a 3D rotation matrix $\bR\in \SO(3)$ and some translation vector $\bt\in\bbR^3$; therefore the constraint set $\cC$ of $(\bR,\bt)$ is the special Euclidean group $\SE(3):= \SO(3)\times \bbR^3$. The precise expression of the residual function $r_i(\cdot)$ is given as
    \begin{equation}\label{eq:PCR-residual}
        r_i(\bR, \bt) = \|\bm{b}_i - \bR \ba_i - \bt \|_2,
    \end{equation}
    where $\ba_i\in\bbR^3$ and $\bm{b}_i\in\bbR^3$ are 3D points. 
\end{example}

\begin{remark}[General Convergence Theory of IRLS for Outlier-Robust Estimation]
    The IRLS paradigm has been widely applied with massive empirical success to many vision, robotics, and machine learning problems formulated in the form of \cref{eq:robust} or \cref{eq:weight+x} \cite{Aftab-TPAMI2014,Zhou-ECCV2016,Tsakiris-JMLR2018,Chung-SIAM-J-SC2019,Dong-PRL2019,Ding-CVPR2020,Shi-ICML2020,Shi-NeurIPS2020,Iwata-ECCV2020,Zhao-TPAMI2020,Yang-RA-L2020,Sidhartha-3DV2021,Zhou-TGRS2021,Antonante-TRO2021,Le-3DV2021,Gazzola-SIAM-J-SC2021,Torbjorn-MIC2022,Mankovich-CVPR2022}. However, existing convergence  analysis of IRLS has either of the following limitations: (i) it is application-specific \cite{Daubechies-CPAM2010,Mohan-JMLR2012,Lai-SIAM-J-NA2013,Lerman-FoCM2015,Lerman-J-IMA2018,Kummerle-JMLR2018,Kummerle-ICML2021,Kummerle-NeurIPS2021,Mukhoty-AISTATS2019,Peng-NeurIPS2022,Kummerle-NeurIPS2023}; (ii) it is for the convex case \cite{Razaviyayn-SIAM-J-Opt2013,Beck-SIAMOpt2015,Hong-MP2017}; (iii) it only reveals asymptotic convergence \cite{Aftab-WCACV2015}. The global rates of IRLS for the general non-convex formulation \cref{eq:robust} or \cref{eq:weight+x} have not been proved for decades. In this paper, we obtain global sublinear rates of IRLS from our general convergence theorems for \cref{algo:BEM}.
\end{remark}



\section{Prior Art on Block Coordinate Descent: Cross the Watershed}\label{section:prior-art}
Would there be existing theorems on block coordinate descent that are directly applicable to the examples of \cref{section:examples}? Indeed, research on problems of the form \cref{eq:obj} and \cref{algo:BEM} has a long history and rich contemporary developments. While this fascinating line of work is too vast to detail, one might venture to assert that researchers are crossing the watershed from \textit{convexity} to \textit{non-convexity} (in Rockafellar's tone \cite{Rockafellar-SIAM-Rev1993}), and from \textit{convergence} analysis to \textit{convergence rate} analysis. That being said, works on the non-convex side are relatively scarce, and existing convergence theory of \cref{algo:BEM} is limited in many ways. To support this assertion and trace the historical trajectory of theoretical advances, we next review prior works.

\myparagraph{Convexity, Convergence} \cref{algo:BEM} made its early appearance\footnote{In this section we say ``\cref{algo:BEM}'' to mean one of its special cases (e.g., one case is to perform exact minimization \cref{eq:BEM-i} for every block). Besides \cref{eq:BEM-i}-\cref{eq:BRGD-i}, there are other update rules for \cref{algo:BEM}; see, e.g., \cite{Wright-MP2015,Shi-arXiv2016v2} for surveys.} in \textit{\textbf{1957}} \cite{Hildreth-1957} to minimize a convex quadratic objective with non-negative variables, and then in \textit{\textbf{1959}} \cite{D-1959} it was proved to converge for a more general situation where the objective is convex  (among other assumptions). In \textit{\textbf{1963}} \cite{Warga-J-SIAM1963}, Warga realized that \cref{algo:BEM} could fail to converge when $F$ is not differentiable (see also Powell's counterexample \cite{Powell-MP1973}), while convergence is guaranteed if $F$ is continuously differentiable and block-wise strictly convex and the $\cM_i$'s are compact and convex (see also \cite[Proposition 3.9]{Bertsekas-1997}). In their \textit{\textbf{1970}} book (\cite{Ortega-1970},  \cite[Theorem 14.6.7]{Ortega-2000}), Ortega \& Rheinboldt proved that \cref{algo:BEM} converges in the unconstrained case, assuming $F$ is strongly convex and differentiable. In the same year \cite{Zadeh-MS1970}, Zadeh showed that, in the unconstrained case with $\cM_i=\bbR$, a \textit{pseudo-convexity} assumption would suffice for convergence (instead of strong convexity). It was not until the turn of the century that Grippo \& Sciandrone extended Zadeh's work, respectively, for the unconstrained case \cite{Grippo-OMS1999} or convex constraints \cite{Grippo-ORL2000}, proving that \cref{algo:BEM} converges as long as $F$ is continuously differentiable and either pseudo-convex \cite[Proposition 5]{Grippo-ORL2000} or block-wise strictly quasi-convex \cite[Proposition 6]{Grippo-ORL2000}. In \textit{\textbf{2001}} \cite{Tseng-JOTA2001}, Tseng unified and extended the above results in two ways. First, he adopted a more general notion of differentiability \cite[Section 3 \& Theorem 4.1]{Tseng-JOTA2001}. Then, he considered
\begin{equation}\label{eq:obj-extended}
    \min_{\bx} F(\bx) + \sum_{i=1}^b R_i(\bx_i) \ \ \ \ \ \ \ \ \ \  \ \ \ \ \  \bx=[\bx_1;\dots;\bx_b]\in\bbR^{n_1+\cdots + n_b},
\end{equation}
a formulation more general than \cref{eq:obj} in the sense that $R_i(\cdot)$ can be the indicator function of $\cM_i$; be that as it may, a quasi-convexity assumption on the objective of \cref{eq:obj-extended} is still needed in \cite[Theorem 5.1]{Tseng-JOTA2001}.

\myparagraph{Convexity, Convergence Rates} In \textit{\textbf{1993}}, Luo \& Tseng proved local linear convergence of \cref{algo:BEM} in \cite[Proposition 3.4]{Luo-AOR1993}, by assuming, among other assumptions, that $F$ is block-wise strongly convex and $\cM_i$'s are closed intervals of $\bbR$; see also \cite[Theorem 2.1]{Luo-JOTA1992}. More than a decade later, with motivations from many big data applications, a fruitful series of works have been published, which prove, under different settings, global convergence rates of \cref{algo:BEM} and its variants applied to \cref{eq:obj-extended}; see, e.g., \cite{Hsieh-ICML2008,Shalev-ICML2009,Nesterov-SIAM-J-Opt2012,Saha-SIAM-J-Opt2013,Beck-SIAM-J-2013,Richtarik-MP2014,Razaviyayn-NeurIPS2014,Lin-NeurIPS2014,Beck-SIAMOpt2015,Lu-MP2015,Sun-NeurIPS2015,Li-JMLR2017,Hong-MP2017,Diakonikolas-ICML2018,Cai-arXiv2022}; some tricks of the trade for such proofs are timely summarized in \cite[Chapters 11 \& 14]{Beck-OptBook2017}. These exciting contributions, however, come with certain convexity assumptions; for example, by assuming $R_i(\cdot)$ to be convex, one misses out on a whole universe of opportunities for incorporating non-convex regularizers or constraints, which could be valuable in many contexts.

\myparagraph{Non-Convexity, Convergence} For the \textit{fully} non-convex case where no \textit{block-wise} convexity assumption is ever made, related works have been few and far between. In \textit{\textbf{1999}} \cite[Theorem 6.3]{Grippo-OMS1999}, Grippo \& Sciandrone established an asymptotic convergence of \cref{algo:BEM} for the unconstrained case with two blocks; see also \cite[Corollary 2]{Grippo-ORL2000}. A similar convergence result was shown later in \textit{\textbf{2017}} \cite[Theorem 1]{Xu-JSC2017} for a variant of \cref{algo:BEM} applied to formulation \cref{eq:obj-extended}, where Xu \& Yin assumed $F$ in \cref{eq:obj-extended} is block-wise Lipschitz smooth. Finally, it has recently been shown that, in the unconstrained case, \cref{algo:BEM} with (randomized) block gradient descent update escapes strict saddle points almost surely \cite{Lee-MP2019,Chen-SIAM-J-Opt2023}.

\myparagraph{Non-Convexity, Convergence Rates} In  \textit{\textbf{1997}}  \cite[Chapter 3]{Bertsekas-1997}, Bertsekas \& Tsitsiklis showed \cref{algo:BEM} converges linearly under some \textit{contraction mapping} assumption; contraction mappings can be constructed in the strongly convex, Lipschitz smooth, and unconstrained case, but are otherwise hard to verify in the non-convex and constrained regime. In the last decade, several works emerged with global sublinear rate guarantees for \cref{algo:BEM}, but were tailored to specific non-convex applications, e.g., \textit{matrix factorization} (\cref{example:MF}), \textit{optimal transport} \cite[Theorem 1(a) \& Section 5]{Guminov-ICML2021} and \textit{projection robust Wasserstein distance} (\cref{example:PRWD}), \textit{Burer-Monteiro factorization} (\cref{example:SDP}). This motivates the need to unify these proofs under a general, flexible framework.

The work of Xu \& Yin  \cite{Xu-JSC2017} shines for its generality in analyzing a variant of \cref{algo:BEM} applied to \cref{eq:obj-extended}, but their rate guarantee \cite[Theorem 3]{Xu-JSC2017} assumes the so-called \textit{Kurdyka-Lojasiewicz} property \cite[Definition 1]{Xu-JSC2017}. This property is hard to verify and incurs some unknown constant (cf. \cite[Remark 7]{Xu-JSC2017}), which somehow limits the practical light that their theory could have otherwise shed.

To tackle an important application in robotics, Tian et al. considered variants of \cref{algo:BEM} \cite{Tian-TRO2021,Tian-RAL2020}. In \cite[Section IV]{Tian-TRO2021}, Tian et al. used a Riemannian trust-region method to decrease the objective sufficiently over each block. This is a different update rule than \cref{eq:BEM-i}-\cref{eq:BRGD-i}. The other notable difference is that Tian et al. considered selecting blocks in a greedy or stochastic fashion, rather than in the cyclic order of \cref{algo:BEM}. Finally, while Tian et al. instinctively felt that their method and analysis would apply to a wider class of smooth manifold optimization problems beyond robotics, we enrich this instinct with ample examples.

The recent paper of Gutman \& Ho-Nguyen \cite{Gutman-MOR2023} is another effort to extend block coordinate methods to the non-convex case; their Theorem 3 guarantees a sublinear convergence rate for an interesting block variant of \textit{Riemannian} gradient descent. The framework of \cite{Gutman-MOR2023} is pretty general in that $\cM$ is a manifold not necessarily embedded in the Euclidean space and does not necessarily have a product structure as in \cref{eq:obj}, but in pursuing this generality of the problem setup it has sacrificed \textit{flexibility}, and, somehow unexpectedly, \textit{applicability}:
\begin{itemize}[wide,parsep=0pt,topsep=0pt]
    \item (\textit{Flexibility}) \textit{Parallel transport} is used in the analysis of \cite{Gutman-MOR2023}, in combination with \textit{exponential maps}. The use of these advanced Riemannian geometry tools makes it non-trivial to incorporate insights from the vast literature (reviewed above) into the framework of \cite{Gutman-MOR2023} and to extend their algorithm to other commonly used paradigms, e.g., block exact minimization \cref{eq:BEM-i}, block majorization minimization \cref{eq:BMM-i}, block Riemannian gradient descent with general retractions \cref{eq:BRGD-i}, or their combinations. 
    \item (\textit{Applicability}) The general analysis of \cite{Gutman-MOR2023} has assumptions that actually hinder its applicability. Specifically, in \cite{Gutman-MOR2023} $\cM$ is assumed to be \textit{connected}, which fails to be true, e.g., if $\cM$ is the \textit{orthogonal group} known to have two connected components. Also, $F(\bx)$ is assumed to have Lipschitz continuous \textit{Riemannian} gradients in $\bx$ up to parallel transport \cite[Eq. 3]{Gutman-MOR2023}. The assumption is hard to verify due to the presence of parallel transport, and, as will be shown, the Euclidean counterpart of this assumption (which is used in \cite{Beck-SIAM-J-2013}, \cite[Theorem 11.14]{Beck-OptBook2017}) is violated for a class of applications.
\end{itemize}

\section{Our Contribution}\label{section:contribution}
By making no convexity assumptions whatsoever on the constraint set $\cM$ or objective $F$, we land ourselves on the other side of the watershed, where we provide novel analysis of the convergence (rates) for \cref{algo:BEM} and its variants. In our setup \cref{eq:obj}, $\cM$ is a product set $\cM_1\times\cdots \times \cM_b$ embedded in the Euclidean space, where $\cM_i$'s are either potentially non-convex (\cref{section:convergence}) or specifically smooth manifolds (\cref{section:convergence-rate}). In the latter case, our setting is slightly less general than the framework of \cite{Gutman-MOR2023} (reviewed above), but our aim is to trade generality for enhancing other aspects significantly:
\begin{itemize}[wide,parsep=0pt,topsep=0pt]
    \item (\textit{Flexibility}) Our setting and analysis try to balance Riemannian and Euclidean components so that prior works and insights developed over the decades can be easily integrated into the proposed setting. Indeed, our analysis of \cref{algo:BEM} covers a meaningful combination of exact minimization, majorization minimization, and block Riemannian gradient descent with general retractions.
    \item (\textit{Applicability}) Different from \cite{Gutman-MOR2023}, we never assume $\cM$ is connected. Moreover, in \cref{theorem:BEM-RGD}  $F(\bx)$ is not assumed to have Lipschitz continuous (Riemannian) gradients in $\bx$ (up to parallel transport). Using weaker assumptions introduces some technical difficulties that we have overcome, and our theory enjoys wider applicability, as shown by more than twenty examples throughout the paper.
\end{itemize}

\section{Technical Background}\label{section:preliminary}
\myparagraph{Riemannian Geometry} 
Let $\cM$ be a smooth manifold of $\bbR^{n_1+\cdots+n_b}$. Each $\bx\in \cM$ is associated with a linear subspace of $\bbR^{n_1+\cdots+n_b}$, called the \textit{tangent space}, denoted by $T_{\bx}\cM$. A smooth function $F : \cM \to \bbR$ is associated with the gradient $\nabla F(\bx)$ of $F$, as well as the projection of $\nabla F(\bx)$ onto the tangent space $T_{\bx}\cM$; this projection is called the \textit{Riemannian gradient} of $F$ at $\bx$, denoted by $\rgrad F(\bx)$. $\cM$ is also associated with a map $\retr_{\bx}:T_{\bx} \cM\to \cM$, called \textit{retraction}, which satisfies certain smoothness properties (see, e.g., \cite[Deﬁnition 4.1.1]{Absil-2009} for an exact definition). Here are two examples: If $\cM$ is the Euclidean space, then $\bv\mapsto \bx + \bv $ is a retraction; if $\cM$ is the unit sphere, then $\bv\mapsto (\bx + \bv) / \|\bx + \bv \|$ is a retraction (defined on $T_{\bx}\cM=\{\bv: \bv^\top \bx=0 \}$). The reader interested in Riemannian geometry and optimization is referred to books \cite{Absil-2009,Udriste-book1994,Lee-book2012,Lee-book2018,Boumal-2023} and papers \cite{Tron-TAC2012,Afsari-SIAM-J-CO2013,Bonnabel-TAC2013,Liu-ICML2016,Zhang-COLT2016,Bento-JOTA2016,Bento-JOTA2017,Gao-SIAM-J-Opt2018,Boumal-IMA-J-NA2019,Criscitiello-NeurIPS2019,Sun-NeurIPS2019,Chen-SIAM-J-Opt2020,Zhang-MP2020,Huang-MP2022,Weber-MP2022,Criscitiello-FoCS2022,Kim-ICML2022,Li-arXiv2022ADMM,Sra-arXiv2022v2,Xiao-arXiv2022v2,Si-arXiv2023v2}.


\myparagraph{Tangent Cone \& Stationary Points} Recall the definition of \textit{tangent cone} (cf. \cite[Definition 5.2]{Andreasson-2020}):
\begin{definition}[Tangent Cone]\label{def:tangent-cone}
    Let $S\subset\bbR^n$ be a non-empty closed set with $\ba\in S$. A vector $\bv$ is called a \textit{tangent direction} at $\ba$ for $S$ if for some sequence $\{\ba_j\}_j\subset S$ and $\{\lambda_j\}_j\subset (0,\infty)$ we have
    \begin{equation}
        \lim_{j\to\infty} \ba_j = \ba,\ \ \ \ \lim_{j\to\infty} \lambda_j=0 ,\ \ \ \ \lim_{j\to\infty} \frac{\ba_j - \ba}{\lambda_j} = \bv.
    \end{equation}
    The \textit{tangent cone} $T_{S}(\ba)$ of $S$ at $\ba$ is the set of all tangent directions at $\ba$ for $S$.
\end{definition}
Since $F$ is differentiable, every local minimizer $\bx\in \cM$ of \cref{eq:obj} satisfies (cf. \cite[Theorem 6.12]{Rockafellar-2009})
\begin{equation}\label{eq:optimality-condition}
    \langle \nabla F(\bx), \bv \rangle \geq 0,\ \  \forall \bv\in T_{\cM}(\bx).
\end{equation}
\begin{definition}\label{definition:stationary}
    If $\bx\in\cM$ and $\bx$ fulfills \cref{eq:optimality-condition}, then we call $\bx$ a stationary point of \cref{eq:obj}.
\end{definition}
Note that, if $\cM$ is a smooth manifold, then the tangent cone $T_{\cM}(\bx)$ coincides with the tangent space $T_{\bx}\cM$ and the optimality condition \cref{eq:optimality-condition} becomes $\rgrad F(\bx)=0$ (i.e., Riemannian gradient $=$ $0$).

\myparagraph{Notations} The objective $F$ is a function defined on $\bbR^{n_1+\cdots+n_b}$, while we use the same symbol, $F$, to denote its restriction on $\cM$. The symbols $\| \cdot \|$ and $\langle \cdot, \cdot \rangle$ denote Euclidean norm and Euclidean inner product, respectively. Denote by $\nabla_i F(\bx)$ the partial derivative with respect to the $i$-th block of variables $\bx_i$, and by $\rgrad_i F(\bx)$ the projection of $\nabla_i F(\bx)$ onto $T_{\bx_i} \cM_i$. We use $[\bx_1;\dots;\bx_b]$ to denote concatenation into a column vector denoted by $\bx_{1:b}$, also written as $\bx$ for brevity. For $i\in \{1,\dots,b\}$ and $\bxi\in \cM_i$, we will frequently write $[\bx_{1:i-1}; \bxi; \bx_{i+1:b}]$, which means in particular $[\bxi;\bx_{2:b}]$ for the case $i=1$ or $[\bx_{1:b-1};\bxi]$ if $i=b$; that is, if $i>j$, the notation $\bx_{i:j}$ denotes an empty vector.

\section{Convergence}\label{section:convergence}
We make the following basic assumption throughout the section:
\begin{assumption}\label{assumption:basic}
    Every $\cM_i\subset \bbR^{n_i}$ is closed and non-empty. The objective $F:\bbR^{n_1+\cdots+n_b}\to \bbR$ of \cref{eq:obj} is differentiable and its level set $\{ \bx: F(\bx) \leq \gamma \}$ is bounded for every $\gamma\in\bbR$. 
\end{assumption}
\cref{assumption:basic} ensures \cref{eq:obj} is well-posed in the sense that the set of global minimizers of \cref{eq:obj} is non-empty, forming a compact subset of $\cM$ (cf. \cite[Example 1.11]{Rockafellar-2009}).

In this section we derive asymptotic convergence guarantees for \cref{algo:BEM}. Note that at this stage we do not assume $\cM_i$ is a smooth manifold, meaning that we will suppress the Riemannian flavor \cref{eq:BRGD-i} of  \cref{algo:BEM} for the moment and focus on exact minimization  \cref{eq:BEM-i} and majorization minimization \cref{eq:BMM-i}. Specifically, in \cref{subsection:BEM} we consider \cref{algo:BEM} with \cref{eq:BEM-i} executed all the time, while in \cref{subsection:BMM} we analyze step \cref{eq:BMM-i} solely. This is without loss of generality as combining everything together yields guarantees for \cref{algo:BEM} with options \cref{eq:BEM-i} and \cref{eq:BMM-i} blended. Finally, we emphasize that our convergence theory in this section arises as extensions of the classic results, namely \cite[Proposition 2.7.1]{Bertsekas-1999} and \cite[Theorem 2]{Razaviyayn-SIAM-J-Opt2013}. That said, both the results of \cite{Bertsekas-1999} and \cite{Razaviyayn-SIAM-J-Opt2013} assume $\cM_i$'s are convex, while our theorems do not have this convexity assumption at all. 


\subsection{Block Exact Minimization}\label{subsection:BEM}
Here we consider \cref{algo:BEM} with rule \cref{eq:BEM-i}. We need the following assumption:
\begin{assumption}[Unique Block Minimizer]\label{assumption:unique-block-min}
    For every $i=1,\dots,b$ and every $\bx_j\in \cM_j$ ($\forall j\neq i$), the following problem has a unique global minimizer:
    \begin{equation}
        \min_{\bxi\in\cM_i} F(\bx_{1:i-1},\bxi, \bx_{i+1:b})
    \end{equation}
\end{assumption}
\begin{remark}[\cref{assumption:unique-block-min} in GPCA]\label{remark:unique-block-min-GPCA}
    \cref{eq:GPCA-LS} immediately violates \cref{assumption:unique-block-min}: If $\bA_i^{t+1}$  minimizes \cref{eq:BEM-i-GPCA}, then right-multiplying $\bA_i^{t+1}$ by any orthonormal matrix yields a global minimizer as well. This seems to be an inherent issue as \cref{eq:BEM-i-GPCA} admits a unique global minimizer only up to isometry. This can be fixed fairly easily: revise \cref{assumption:unique-block-min} such that the uniqueness is up to the symmetry of the underlying manifolds. With this in mind, we proceed with \cref{assumption:unique-block-min} for simplicity.
\end{remark}
For illustration, assume $\cM_i=\bbR^{n_i}$. If $F$ is block-wise strictly convex, then \cref{assumption:unique-block-min} holds, while the converse is not necessarily true, therefore  \cref{assumption:unique-block-min} is a weaker assumption than block-wise strict convexity. However, in the constrained case, \cref{assumption:unique-block-min} does not necessarily hold even if $F$ is block-wise strictly convex, due partly to the potential symmetry (cf. \cref{remark:unique-block-min-GPCA}).

A basic convergence guarantee for \cref{algo:BEM} now ensues:
\begin{theorem}\label{theorem:BEM}
    Under \cref{assumption:basic} and \cref{assumption:unique-block-min}, the sequence $\{\bx^t\}_t$ of \cref{algo:BEM} that performs updates via \cref{eq:BEM-i} is bounded and has limit points. Moreover, every limit point is a stationary point of \cref{eq:obj}.
\end{theorem}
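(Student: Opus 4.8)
The plan is to run the classical cyclic block-minimization argument (in the spirit of \cite[Proposition 2.7.1]{Bertsekas-1999}) in the non-convex setting, with \cref{assumption:unique-block-min} taking over the role played there by block-wise strict convexity. For bookkeeping I would introduce the intermediate iterates $\bx^{t,i}:=[\bx^{t+1}_{1:i};\bx^t_{i+1:b}]$ for $i=0,\dots,b$, so that $\bx^{t,0}=\bx^t$, $\bx^{t,b}=\bx^{t+1}$, and the $i$-th inner step of \cref{eq:BEM-i} carries $\bx^{t,i-1}$ to $\bx^{t,i}$ by minimizing $F$ over block $i$ alone. Since $\bx^{t+1}_i$ is a global block minimizer, $F(\bx^{t,i-1})\ge F(\bx^{t,i})$, so $\{F(\bx^t)\}_t$ is non-increasing and every iterate stays in the level set $\{F\le F(\bx^0)\}$, which is closed (as $F$ is continuous) and bounded by \cref{assumption:basic}, hence compact; together with closedness of $\cM=\cM_1\times\cdots\times\cM_b$ this shows $\{\bx^t\}_t$ is bounded with all limit points in $\cM$. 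As $F$ attains its infimum on the compact level set, $\{F(\bx^t)\}_t$ is bounded below and hence converges to some $F^\star\in\bbR$; from the sandwich $F(\bx^t)\ge F(\bx^{t,i})\ge F(\bx^{t+1})$ each $\{F(\bx^{t,i})\}_t$ converges to the same $F^\star$.

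Next I would fix a limit point $\bx^\star$, say $\bx^{t_k}\to\bx^\star$, and pass to a further subsequence (not relabeled) along which $\bx^{t_k,i}\to\bx^{\star,i}$ for every $i=0,\dots,b$ simultaneously --- possible since each $\{\bx^{t_k,i}\}_k$ is bounded and $b$ is finite, with $\bx^{\star,0}=\bx^\star$. The heart of the argument is an induction on $i$ showing $\bx^{\star,i}=\bx^\star$ and that $\bx^\star_i$ globally minimizes $\bxi\mapsto F(\bx^\star_{1:i-1},\bxi,\bx^\star_{i+1:b})$ over $\cM_i$. Assuming $\bx^{\star,i-1}=\bx^\star$, write $\bx^{\star,i}=[\bx^\star_{1:i-1};\bar\bx_i;\bx^\star_{i+1:b}]$ with $\bar\bx_i\in\cM_i$ (closedness of $\cM_i$). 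Letting $k\to\infty$ in $F(\bx^{t_k,i})\le F(\bx^{t_k+1}_{1:i-1},\bxi,\bx^{t_k}_{i+1:b})$ for an arbitrary $\bxi\in\cM_i$ --- using $\bx^{t_k+1}_{1:i-1}\to\bx^\star_{1:i-1}$ (this is where $\bx^{\star,i-1}=\bx^\star$ enters), $\bx^{t_k}_{i+1:b}\to\bx^\star_{i+1:b}$, continuity of $F$, and $F(\bx^{t_k,i})\to F^\star$ --- gives $F^\star\le F(\bx^\star_{1:i-1},\bxi,\bx^\star_{i+1:b})$; and since $F(\bx^{\star,i})=F^\star$ we get $F(\bx^\star_{1:i-1},\bar\bx_i,\bx^\star_{i+1:b})=F^\star$, so $\bar\bx_i$ is a block minimizer with value $F^\star$. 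But $F(\bx^\star)=\lim_k F(\bx^{t_k})=F^\star$ too, so $\bx^\star_i$ also attains value $F^\star$ and is therefore another block minimizer; by \cref{assumption:unique-block-min} the block minimizer is unique, forcing $\bar\bx_i=\bx^\star_i$, i.e.\ $\bx^{\star,i}=\bx^\star$, which closes the induction.

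Having established that $\bx^\star_i$ minimizes the $i$-th block problem at $\bx^\star$, the first-order optimality condition \cref{eq:optimality-condition} applied to that differentiable problem over the closed set $\cM_i$ yields $\langle\nabla_i F(\bx^\star),\bv_i\rangle\ge0$ for every $\bv_i\in T_{\cM_i}(\bx^\star_i)$, and this holds for all $i$. To conclude, I would use that for the product $\cM=\cM_1\times\cdots\times\cM_b$ one has $T_{\cM}(\bx^\star)\subseteq T_{\cM_1}(\bx^\star_1)\times\cdots\times T_{\cM_b}(\bx^\star_b)$ --- immediate from \cref{def:tangent-cone} by reading off the blocks of a converging difference quotient (only this easy inclusion is needed). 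Hence any $\bv\in T_{\cM}(\bx^\star)$ decomposes as $\bv=[\bv_1;\dots;\bv_b]$ with $\bv_i\in T_{\cM_i}(\bx^\star_i)$, and $\langle\nabla F(\bx^\star),\bv\rangle=\sum_{i=1}^b\langle\nabla_i F(\bx^\star),\bv_i\rangle\ge0$, so $\bx^\star$ is a stationary point of \cref{eq:obj} in the sense of \cref{definition:stationary}.

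The step I expect to be the main obstacle is the induction in the second paragraph --- specifically, propagating $\bx^{\star,i-1}=\bx^\star$ forward to $\bx^{\star,i}=\bx^\star$. Along a subsequence the cyclically updated blocks may a priori ``drift'' to limits $\bar\bx_i\neq\bx^\star_i$; it is precisely \cref{assumption:unique-block-min} that rules this out, since the drifted limit and $\bx^\star_i$ would both be global block minimizers achieving $F^\star$. Without uniqueness the later block subproblems would be anchored at the wrong base point $\bx^{\star,i-1}\neq\bx^\star$ and the telescoping chain of block minimization inequalities would collapse, so this is the load-bearing assumption of the theorem; everything else is routine compactness-and-continuity bookkeeping.
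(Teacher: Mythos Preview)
Your proposal is correct and follows essentially the same approach as the paper's proof: monotone descent plus bounded level sets for boundedness, extraction of a subsequence along which all intermediate iterates converge, an induction on the block index using \cref{assumption:unique-block-min} to force the intermediate limits to coincide with $\bx^\star$, and finally the tangent-cone product inclusion to pass from coordinate-wise stationarity to stationarity of \cref{eq:obj}. The only cosmetic differences are notational (your $\bx^{t,i}$ and $\bar\bx_i$ versus the paper's inline $[\bx^{t+1}_{1:i};\bx^t_{i+1:b}]$ and $\hat{\by}_i$) and organizational (the paper packages the last two steps into a separate \cref{prop:cwm->cws->s} via the notion of coordinate-wise minimizer).
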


\subsection{Block Majorization Minimization}\label{subsection:BMM}
Here, we consider \cref{algo:BEM} with the majorization minimization option \cref{eq:BMM-i} activated. Note that option \cref{eq:BMM-i} requires some upper bound function (\textit{majorizer}) $G_i$ of $F$ for block $i$. Therefore, to proceed, we need first to make the notion of (\textit{block}) \textit{majorizer} precise:
\begin{definition}[Block-$i$ Majorizer]\label{definition:BM}
    Function $G_i:\cM_i\times \cM\subset \bbR^{n_i}\times \bbR^{n_1+\cdots+n_b}\to \bbR$ is called a block-$i$ majorizer of $F$ if the following hold for every $\bx_i\in\cM_i$ and every $\bz\in\cM$:
    \begin{itemize}
        \item $F(\bz_{1:i-1}, \bx_i, \bz_{i+1:b}) \leq  G_i(\bx_i, \bz)$, with equality attained when $\bx_i=\bz_i$.
        \item $G_i$ is differentiable and the two gradients $\nabla_{\bz_i} F(\bz)$ and $\nabla_{\bx_i} G_i(\bx_i, \bz)|_{\bx_i=\bz_i}$ are equal.
    \end{itemize}
\end{definition}
\begin{example}[\cref{definition:BM} in GPCA-Huber]\label{example:majorizer-GPCA}
    Consider the functions
    \begin{align*}
        g(r,s)&:= h(s) + \frac{r^2 - s^2}{2 \cdot \max\{|s|, \epsilon \} } \\ 
        G_1(\bC_1,\bA_1,\bA_2)&:= \sum_{j=1}^m g\big( \| \bp_j^\top \bC_1 \| ; \| \bp_j^\top \bA_1 \| \big) \cdot h\big( \| \bp_j^\top \bA_2 \| \big).
    \end{align*}
    One verifies that, for every $r,s\in\bbR$, we have  $h(r)\leq g(r,s)$ and the partial derivative of $\frac{\partial g(r,s)}{\partial r}$ at $r=s$ is precisely $h'(s)$. Therefore, $G_1$ is a block-$1$ majorizer of the objective of \cref{eq:GPCA-Huber}; a block-$2$ majorizer can be constructed similarly. While it is not obvious how to solve \cref{eq:GPCA-Huber}, minimizing majorizer $G_1$ in variable $\bC_1$ is an eigenvalue problem similar to \cref{eq:BEM-i-GPCA}, easy to solve.
\end{example}
The construction of (block-wise) majorizers is problem-specific, but general principles and practical recipes for doing so have been explored; see, e.g.,  \cite[Propositions 1 and 2]{Razaviyayn-SIAM-J-Opt2013}, \cite{Sun-TSP2016,Hunter-TAS2004,Lange-MM2016,Sra-arXiv2022v2}. To provide a general analysis, we will assume that $F$ admits some block-wise majorizers (instead of constructing them). For majorization minimization \cref{eq:BMM-i} to make sense, it is implicitly assumed that the block-wise majorizers are much easier to be minimized than $F$ over block $i$ (cf. \cref{example:majorizer-GPCA}).

	


The following convergence result is of a similar flavor to \cref{theorem:BEM}:
\begin{theorem}\label{theorem:BMM}
    Assume \cref{assumption:basic} holds and $F$ has a block-$i$ majorizer $G_i$ for every $i=1,\dots,b$. Moreover, for every $\bz\in \cM$, assume the following problem has a unique global minimizer:
    \begin{equation}
        \min_{\bxi\in\cM_i} G_i( \bxi, \bz )
    \end{equation}
    If \cref{algo:BEM} performs updates via \cref{eq:BMM-i}, then every limit point of its iterates $\{\bx^t\}_t$ is a stationary point of \cref{eq:obj}.
\end{theorem}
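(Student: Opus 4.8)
The plan is to mirror the proof of \cref{theorem:BEM}, replacing block exact minimization by majorization minimization, and to take limits through the smooth majorizers $G_i$ rather than through the (possibly ill-behaved) tangent cones $T_{\cM_i}(\cdot)$, so that no convexity of the $\cM_i$'s is needed. \emph{Step 1: monotone descent.} Introduce the interleaved iterates $\bz^{t,0}:=\bx^t$ and $\bz^{t,i}:=[\bx^{t+1}_{1:i};\bx^{t}_{i+1:b}]$, so that $\bz^{t,b}=\bx^{t+1}$ and the point $\bq^t_i$ in \eqref{eq:BMM-i} is exactly $\bz^{t,i-1}$; in particular $\bx^{t+1}_i$ is a global minimizer of $G_i(\cdot,\bz^{t,i-1})$ over $\cM_i$. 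The two defining properties of a block-$i$ majorizer (\cref{definition:BM}) give, for each $t$ and $i$,
\begin{equation*}
 F(\bz^{t,i})\ \le\ G_i(\bx^{t+1}_i,\bz^{t,i-1})\ \le\ G_i(\bx^{t}_i,\bz^{t,i-1})\ =\ F(\bz^{t,i-1}).
\end{equation*}
Hence the numbers $F(\bz^{t,i})$, listed in lexicographic order of $(t,i)$, form a non-increasing sequence. Since \cref{assumption:basic} makes every sublevel set of $F$ bounded and $\cM$ is closed, the whole orbit $\{\bz^{t,i}\}$ lies in a fixed compact subset of $\cM$; in particular $\{\bx^t\}$ has limit points and $F(\bz^{t,i})$ decreases to a finite $F^\star$. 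Consecutive gaps therefore vanish: for each fixed $i$, $F(\bz^{t,i-1})-F(\bz^{t,i})\to 0$, and by the sandwich above also $G_i(\bx^{t}_i,\bz^{t,i-1})-G_i(\bx^{t+1}_i,\bz^{t,i-1})\to 0$.

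\emph{Step 2: successive iterates coalesce.} I claim $\|\bx^{t+1}_i-\bx^{t}_i\|\to 0$ for every $i$. If not, pass to a subsequence along which this norm stays $\ge\delta>0$ while (by compactness) $\bz^{t,i-1}\to\bz^\star$ and $\bx^{t+1}_i\to\by_i$; since the $i$-th block of $\bz^{t,i-1}$ is $\bx^{t}_i$, this forces $\bx^{t}_i\to\bz^\star_i$ and $\|\by_i-\bz^\star_i\|\ge\delta$. Letting $t\to\infty$ in $G_i(\bx^{t+1}_i,\bz^{t,i-1})\le G_i(\bxi,\bz^{t,i-1})$ and using continuity of $G_i$ shows $\by_i$ globally minimizes $G_i(\cdot,\bz^\star)$ over $\cM_i$. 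On the other hand $G_i(\bx^{t}_i,\bz^{t,i-1})=F(\bz^{t,i-1})$ by the equality clause of \cref{definition:BM}, so it tends to $G_i(\bz^\star_i,\bz^\star)$; combined with the vanishing majorizer gap from Step 1 this gives $G_i(\by_i,\bz^\star)=G_i(\bz^\star_i,\bz^\star)$, so $\bz^\star_i\in\cM_i$ is \emph{also} a global minimizer of $G_i(\cdot,\bz^\star)$. The uniqueness hypothesis of \cref{theorem:BMM} then forces $\by_i=\bz^\star_i$, contradicting $\|\by_i-\bz^\star_i\|\ge\delta$.

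\emph{Step 3: stationarity of limit points.} Let $\bx^\star$ be a limit point, say $\bx^{t_k}\to\bx^\star$. By Step 2, $\bx^{t_k+1}_j-\bx^{t_k}_j\to 0$ for all $j$, whence $\bz^{t_k,i-1}\to\bx^\star$ and $\bx^{t_k+1}_i\to\bx^\star_i$ for each $i$. Passing to the limit in $G_i(\bx^{t_k+1}_i,\bz^{t_k,i-1})\le G_i(\bxi,\bz^{t_k,i-1})$, valid for every $\bxi\in\cM_i$, yields $G_i(\bx^\star_i,\bx^\star)\le G_i(\bxi,\bx^\star)$: thus $\bx^\star_i$ globally minimizes the differentiable function $G_i(\cdot,\bx^\star)$ over the closed set $\cM_i$, and the first-order optimality condition \eqref{eq:optimality-condition}, applied to this function and set, gives $\langle\nabla_{\bx_i}G_i(\bx^\star_i,\bx^\star),\bv_i\rangle\ge 0$ for all $\bv_i\in T_{\cM_i}(\bx^\star_i)$. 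The gradient-matching clause of \cref{definition:BM} rewrites this as $\langle\nabla_i F(\bx^\star),\bv_i\rangle\ge 0$ for all $\bv_i\in T_{\cM_i}(\bx^\star_i)$. Finally, any $\bv=[\bv_1;\dots;\bv_b]\in T_{\cM}(\bx^\star)$ has $\bv_i\in T_{\cM_i}(\bx^\star_i)$ for every $i$ — project the approximating sequences of \cref{def:tangent-cone} onto each block — so $\langle\nabla F(\bx^\star),\bv\rangle=\sum_{i=1}^b\langle\nabla_i F(\bx^\star),\bv_i\rangle\ge 0$, i.e. $\bx^\star$ is stationary for \eqref{eq:obj} in the sense of \cref{definition:stationary}.

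\emph{Expected main obstacle.} Step 2 is the crux. In the convex-$\cM_i$ setting of \cite[Theorem 2]{Razaviyayn-SIAM-J-Opt2013} one can push optimality inequalities directly to the limit, but without convexity the set-valued map $\bx\mapsto T_{\cM_i}(\bx)$ need not be outer semicontinuous, so a different device is required; here the uniqueness of the majorizer's minimizer, together with its value-matching property on the ``diagonal'' $\bx_i=\bz_i$, is exactly what pins the limit of $\bx^{t+1}_i$ to the limit of $\bx^{t}_i$ and lets Step 3 identify $\bx^\star_i$ as a genuine minimizer of $G_i(\cdot,\bx^\star)$. One should also be mildly careful that all the interleaved quantities $F(\bz^{t,i})$ share the single limit $F^\star$ (which holds because the lexicographically ordered sequence is monotone and bounded), and that $\cM$ being a product of closed sets keeps the full orbit in one compact set.
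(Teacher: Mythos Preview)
Your proof is correct and follows essentially the same route as the paper's: monotone descent via the majorizer inequalities, compactness from \cref{assumption:basic}, the uniqueness hypothesis to force the next and current $i$-th blocks to share the same limit, gradient matching from \cref{definition:BM} to transfer stationarity from $G_i$ to $F$, and the product structure of the tangent cone (the paper's \cref{lemma:tangent-cone-inclusion}) to conclude. The one organizational difference is that your Step~2 establishes the slightly stronger full-sequence statement $\|\bx^{t+1}_i-\bx^{t}_i\|\to 0$, which lets Step~3 proceed without passing to further subsequences; the paper instead fixes a limit point, extracts one subsequence along which all the interleaved points $\bq^{t_j}_i$ converge, and argues $\hat{\by}_i=\hat{\bx}_i$ there directly---same uniqueness trick, different packaging.
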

\begin{example}[Matrix Factorization]\label{example:MF}
With hyper-parameters $\lambda >0,p\in(0,1]$, and $\eta>0$, consider the following formulation for low-rank matrix factorization (cf. \cite[Eq. 22]{Giampouras-TPS2018}):
    \begin{equation}
        \min_{\bU,\bV} F(\bU, \bV) \ \ \ \ \ \ \  F(\bU, \bV):= \frac{1}{2} \cdot \|\bY-\bU\bV^\top  \|_{\textnormal{F}}^2 + \lambda \sum_{i} \big( \|\bu_i \|^2 + \| \bv_i \|^2 +\eta^2 \big)^{p/2}
    \end{equation}
The above summation is over the columns $\bu_i$ (resp. $\bv_i$) of $\bU$ (resp. $\bV$). In \cite{Giampouras-TPS2018}, \cref{algo:BEM} was employed with customized block-wise majorizers for minimization in variables $\bU$ and $\bV$ in an alternating style. Since their majorizers admit unique minimizers (cf. \cite[Remark 2]{Giampouras-TPS2018}), \cref{theorem:BMM} can be invoked to obtain a convergence guarantee for their algorithm, recovering \cite[Proposition 3(a)]{Giampouras-TPS2018}. While $\bU$ and $\bV$ are unconstrained in \cite{Giampouras-TPS2018}, \cref{theorem:BMM} can further be applied to the case where $\bU$ or $\bV$ is constrained to lie in the Stiefel manifold \cite{Dai-TIT2012,Meyer-ICML2011,Boumal-NeurIPS2011,Vandereycken-SIAM-J-Opt2013,Mishra-CS2014} and to \textit{tensor decomposition} \cite{Rontogiannis-JSTSP2021,Giampouras-TSP2022}.
\end{example}
\cref{algo:BEM} executing \cref{eq:BMM-i} minimizes a different objective $G_i$, and yet it converges to a stationary point of \cref{eq:obj} whose objective is $F$. This is not surprising given the connections between $G_i$ and $F$ (\cref{definition:BM}). Indeed, the gradients $G_i$ and $F$ coincide at a certain point, by which one can ``transfer'' the stationarity condition of  \cref{eq:BMM-i} to that of \cref{eq:BEM-i}, and this is how \cref{theorem:BMM} can be proved. Further analysis and additional results regarding \cref{algo:BEM} with option \cref{eq:BMM-i} can be found in the appendix.

\section{Convergence Rates}\label{section:convergence-rate}
Our theorems of \cref{section:convergence} hold for general (non-convex) constraint sets but are concerned only with asymptotic convergence, not convergence rates. In this section, we assume each $\cM_i\subset \bbR^{n_i}$ is a smooth manifold. By endowing such a smoothness structure on $\cM_i$, we can leverage Riemannian optimization techniques and derive more informative convergence rates under verifiable conditions. 

Similarly to \cref{section:convergence} and to begin with, we consider \cref{algo:BEM} that updates $\bx_i^{t+1}$ the Riemannian gradient descent option \cref{eq:BRGD-i} in \cref{subsection:BRGD}. Moreover, in \cref{subsection:BEM+BRGD}, we will allow for the simultaneous use of block exact minimization \cref{eq:BEM-i} and Riemannian gradient descent \cref{eq:BRGD-i} as this fits the practical needs (\cref{example:PRWD}). 

\subsection{Block Riemannian Gradient Descent }\label{subsection:BRGD}
We begin by generalizing the usual Lipschitz smoothness into a block-wise definition:
\begin{definition}[Block-$i$ Lipschitz Smoothness]\label{definition:block-Lipschitz}
Let $i\in\{1,\dots,b\}$. We say $F$ is block-$i$ Lipschitz smooth with constant $L_i>0$ if for every $[\bx_{1:i-1}; \bxi; \bx_{i+1:b}]\in\cM$ and every $\bzeta\in \cM_i$ we have
\begin{equation}\label{eq:block-Lipschitz-continuous-grad}
        \| \nabla_i F ( \bx_{1:i-1}, \bxi ,\bx_{i+1:b}  ) - \nabla_i F ( \bx_{1:i-1}, \bzeta ,\bx_{i+1:b}  ) \| \leq  L_i\cdot \| \bxi - \bzeta  \|.
    \end{equation}
\end{definition}
\begin{example}[\cref{definition:block-Lipschitz} in GPCA]\label{example:Lipschitz-GPCA}
    The objective $F(\bA_1,\bA_2):=\sum_{j=1}^m \| \bp_j^\top \bA_1 \|^2 \cdot \| \bp_j^\top \bA_2 \|^2$ of \cref{eq:GPCA-LS} is block-$1$ (and block-$2$) Lipschitz smooth. To verify this, observe 
    \begin{equation}
        \nabla_1 F(\bxi,\bA_2) - \nabla_1 F(\bzeta,\bA_2) =2 ( \bxi - \bzeta ) \sum_{j=1}^m \| \bp_j^\top \bA_2 \|^2\cdot \bp_j \bp_j^\top.
    \end{equation}
    Since $\bA_2$ and data points $\bp_j$'s are bounded, we see that \cref{eq:block-Lipschitz-continuous-grad} holds for a suitable $L_i$. 
\end{example}
	

Crucially, \cref{example:Lipschitz-GPCA} implies that, for $F$ to be \textit{block-$i$} Lipschitz smooth, in general we need to assume that \textit{all} $\cM_j$'s ($j\neq i$) are bounded, which means they are contained in some compact subset of the Euclidean space. In particular, if $\cM_i$ is a compact submanifold of $\bbR^{n_i}$, the (Euclidean) block-wise Lipschitz smoothness of $F$ has the following ramifications for (Riemannian) retractions:
\begin{lemma}\label{lemma:block-Lipschitz}
    Let $i\in\{1,\dots,b \}$ be fixed. Assume $\cM_i$ is a compact submanifold of $\bbR^{n_i}$. Then we have
    \begin{equation}\label{eq:alpha_i}
        \|\retr_{\bxi}(\bs_i) - \bxi \| \leq \alpha_i \cdot \| \bs_i \|, \ \ \ \forall \bxi\in \cM_i,\ \forall \bs_i\in T_{\bxi} \cM_i
    \end{equation}
    for some constant $\alpha_i>0$. Moreover, if $F$ is continuously differentiable and block-$i$ Lipschitz smooth with constant $L_i>0$, and all $\cM_j$'s ($j\neq i$) are compact, then there is a constant $\rL_i>0$ such that for every $[\bx_{1:i-1}; \bxi; \bx_{i+1:b}]\in\cM$ and every $\bs_i\in T_{\bxi} \cM_i$ we have
    \begin{equation}\label{eq:rL_i}
        \begin{split}
            F(\bx_{1:i-1},\retr_{\bxi}(\bs_i),\bx_{i+1:b}) &\leq F(\bx_{1:i-1},\bxi,\bx_{i+1:b}) \\ 
        &\ \ \ \  \ \ \ \ + \langle \rgrad_i F(\bx_{1:i-1},\bxi,\bx_{i+1:b}), \bs_i \rangle + \frac{\rL_i}{2} \| \bs_i \|^2.
        \end{split} 
    \end{equation}
\end{lemma}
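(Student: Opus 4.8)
I would establish the two inequalities in parallel, in each case splitting $\bs_i$ into a ``small'' regime $\|\bs_i\|\le 1$, treated by differentiating the retraction while using compactness of $\cM_i$ to make the constants uniform in the base point, and a ``large'' regime $\|\bs_i\|>1$, treated by crude boundedness estimates coming from compactness of $\cM_i$ and of $\cM$.

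\emph{The bound \cref{eq:alpha_i}.} The map $(\bxi,\bs)\mapsto\retr_{\bxi}(\bs)-\bxi$ is smooth on the tangent bundle of $\cM_i$ and vanishes whenever $\bs=0$. Because $\cM_i$ is compact, the supremum $\alpha_i'$ of the operator norm of the differential $D\retr_{\bxi}(\bs)$ over the compact set $\{(\bxi,\bs):\bxi\in\cM_i,\ \|\bs\|\le 1\}$ is finite, so the mean value inequality along the segment $\{t\bs_i:t\in[0,1]\}\subset T_{\bxi}\cM_i$ gives $\|\retr_{\bxi}(\bs_i)-\bxi\|\le\alpha_i'\|\bs_i\|$ as long as $\|\bs_i\|\le 1$. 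For $\|\bs_i\|>1$, both $\bxi$ and $\retr_{\bxi}(\bs_i)$ lie in $\cM_i$, which is bounded, so $\|\retr_{\bxi}(\bs_i)-\bxi\|\le\mathrm{diam}(\cM_i)\le\mathrm{diam}(\cM_i)\cdot\|\bs_i\|$. Taking $\alpha_i:=\max\{\alpha_i',\mathrm{diam}(\cM_i)\}$ proves \cref{eq:alpha_i}.

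\emph{The descent inequality \cref{eq:rL_i}.} I would fix the remaining blocks and set $f(\bzeta):=F(\bx_{1:i-1},\bzeta,\bx_{i+1:b})$, so $\rgrad_i F(\bx_{1:i-1},\bxi,\bx_{i+1:b})=\proj_{T_{\bxi}\cM_i}\nabla f(\bxi)$. Let $\gamma(t):=\retr_{\bxi}(t\bs_i)$, a smooth curve lying in $\cM_i$ with $\gamma(0)=\bxi$ and $\gamma'(0)=D\retr_{\bxi}(0)[\bs_i]=\bs_i$ (the last equality is a defining property of retractions). By the chain rule, the fundamental theorem of calculus, and $\langle\proj_{T_{\bxi}\cM_i}\nabla f(\bxi),\bs_i\rangle=\langle\nabla f(\bxi),\bs_i\rangle$ (valid since $\bs_i\in T_{\bxi}\cM_i$),
\[
 f(\retr_{\bxi}(\bs_i))-f(\bxi)-\langle\rgrad_i F(\bx_{1:i-1},\bxi,\bx_{i+1:b}),\bs_i\rangle=\int_0^1\Big(\langle\nabla f(\gamma(t))-\nabla f(\bxi),\gamma'(t)\rangle+\langle\nabla f(\bxi),\gamma'(t)-\bs_i\rangle\Big)\,dt .
\]
The key observation is that $\gamma(t)$ \emph{and} $\bxi$ both lie in $\cM_i$, so \cref{eq:block-Lipschitz-continuous-grad} applies and, combined with \cref{eq:alpha_i}, yields $\|\nabla f(\gamma(t))-\nabla f(\bxi)\|\le L_i\|\gamma(t)-\bxi\|\le L_i\alpha_i\, t\,\|\bs_i\|$. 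When $\|\bs_i\|\le 1$, compactness of $\{(\bxi,\bs):\bxi\in\cM_i,\|\bs\|\le1\}$ further gives $\|\gamma'(t)\|\le c_1\|\bs_i\|$ (from boundedness of $D\retr$) and $\|\gamma'(t)-\bs_i\|=\|(D\retr_{\bxi}(t\bs_i)-D\retr_{\bxi}(0))[\bs_i]\|\le c_2\, t\,\|\bs_i\|^2$ (from Lipschitzness of $\bs\mapsto D\retr_{\bxi}(\bs)$, i.e.\ boundedness of the second derivative of the retraction), while $\|\nabla f(\bxi)\|=\|\nabla_i F(\bx_{1:i-1},\bxi,\bx_{i+1:b})\|\le M_i$ since $\nabla_i F$ is continuous and $\cM$ is compact. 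Hence the integrand is at most $(L_i\alpha_i c_1+M_i c_2)\,t\,\|\bs_i\|^2$, and the integral at most $\tfrac12(L_i\alpha_i c_1+M_i c_2)\|\bs_i\|^2$, establishing \cref{eq:rL_i} with this constant whenever $\|\bs_i\|\le 1$. For $\|\bs_i\|>1$ I would argue crudely: $|F|\le B$ on the compact set $\cM$ for some $B$, and $\retr_{\bxi}(\bs_i)\in\cM_i$, so the left side of \cref{eq:rL_i} is $\le B$ while the right side is $\ge -B-M_i\|\bs_i\|+\tfrac{\rL_i}{2}\|\bs_i\|^2$; any $\rL_i$ large enough that $\tfrac{\rL_i}{2}s^2-M_i s-2B\ge0$ for all $s\ge1$ makes the inequality hold there. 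Taking $\rL_i$ to be the maximum of this constant and $L_i\alpha_i c_1+M_i c_2$ completes the proof.

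\emph{Main obstacle.} The one genuine subtlety — and the reason \cref{eq:rL_i} does not follow at once from the Euclidean descent lemma — is that the straight segment from $\bxi$ to $\retr_{\bxi}(\bs_i)$ in $\bbR^{n_i}$ need not stay in $\cM_i$, whereas \cref{definition:block-Lipschitz} only controls $\nabla_i F$ between points of $\cM_i$. Integrating instead along the retraction curve $\gamma$, which does stay in $\cM_i$, repairs this but forces the appearance of the ``curvature'' term $\langle\nabla f(\bxi),\gamma'(t)-\bs_i\rangle$, which is exactly where the second-order smoothness of the retraction (together with boundedness of $\nabla_i F$ over the compact $\cM$) is consumed. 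Everything else — uniformity of the constants over $\bxi\in\cM_i$, and disposing of directions $\bs_i$ outside the unit ball — is routine once the compactness of $\cM_i$ and of $\cM$ is invoked.
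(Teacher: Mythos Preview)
Your argument is correct but follows a different route from the paper's. The paper does not integrate along the retraction curve; instead it invokes the Euclidean descent lemma directly, writing
\[
F(\bx_{1:i-1},\bzeta,\bx_{i+1:b})\le F(\bx_{1:i-1},\bxi,\bx_{i+1:b})+\langle\nabla_i F(\bx_{1:i-1},\bxi,\bx_{i+1:b}),\bzeta-\bxi\rangle+\tfrac{L_i}{2}\|\bzeta-\bxi\|^2
\]
with $\bzeta=\retr_{\bxi}(\bs_i)$, then replaces $\langle\nabla_i F,\retr_{\bxi}(\bs_i)-\bxi\rangle$ by $\langle\rgrad_i F,\bs_i\rangle+\langle\nabla_i F,\retr_{\bxi}(\bs_i)-\bxi-\bs_i\rangle$ and controls the extra term using the second-order retraction bound $\|\retr_{\bxi}(\bs_i)-\bxi-\bs_i\|\le\beta_i\|\bs_i\|^2$ (also from Boumal et al.) together with the $\alpha_i$ bound and compactness of $\cM$ to bound $\|\nabla_i F\|$. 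This is shorter and needs no small/large $\|\bs_i\|$ split. Your ``main obstacle'' is well spotted: the paper's step implicitly uses block-$i$ Lipschitz smoothness along straight segments in $\bbR^{n_i}$, not merely between points of $\cM_i$, which is a slightly stronger reading of \cref{definition:block-Lipschitz} than the one you adopt (and is how the paper verifies it in \cref{example:Lipschitz-GPCA}, with $F$ defined on the ambient space). Your curve-integration approach sidesteps this by staying in $\cM_i$ throughout, at the price of a longer argument; the underlying ingredients---second-order smoothness of the retraction and a uniform bound on $\|\nabla_i F\|$ over the compact $\cM$---are the same in both proofs.
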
 
\cref{lemma:block-Lipschitz} is an extension of \cite[Appendix B]{Boumal-IMA-J-NA2019} into multiple blocks. Specifically, \cref{eq:alpha_i} follows directly from \cite[Appendix B]{Boumal-IMA-J-NA2019}, while proving \eqref{eq:rL_i} requires a modification of their proofs. It is understood that the constants $\alpha_i$ and $\rL_i$ of \cref{lemma:block-Lipschitz} depend on $L_i$, $\cM_i$'s, and the retraction, and they are independent of $\bs_i$ and $\bx_i$'s. To proceed, we define some constants based on \cref{lemma:block-Lipschitz}. Define $L^\textnormal{max}_b:= \max_{i=1,\dots,b} L_i$, and similarly define $\alpha^\textnormal{max}_b$, $\rL^\textnormal{max}_b$, and $\rL^\textnormal{min}_b$. Finally, defining the quantity
 \begin{equation}\label{eq:C}
         C_b:=  \sqrt{2  \rL^\textnormal{max}_b } + \alpha^\textnormal{max}_b L^\textnormal{max}_b \cdot \sqrt{ \frac{2b}{\rL^\textnormal{min}_b }  },
 \end{equation}
we can then state that the iterates of \cref{algo:BEM} with option \cref{eq:BRGD-i} gravitate towards stationarity:
\begin{theorem}\label{theorem:RGD}
     For each $i=1,\dots,b$ assume $\cM_i$ is a compact smooth submanifold of $\bbR^{n_i}$ and $F$ is block-$i$ Lipschitz smooth with constant $L_i$. Let $C_b$ be defined in \cref{eq:C} and $\rL_i$ in \cref{lemma:block-Lipschitz}. If \cref{algo:BEM} updates its block via \cref{eq:BRGD-i} with stepsize $\lambda^t_i=1/\rL_i$, then its iterates $\{\bx^t\}_{t}$ satisfy
 \begin{equation}\label{eq:BRGD-sublinear}
     \min_{t=0,\dots,T} \big\| \rgrad F(\bx^{t}) \big\| \leq \sqrt{b}\cdot C_b\cdot \sqrt{ \frac{F(\bx^0) - F(\bx^{T+1}) }{T+1} }.
 \end{equation}
\end{theorem}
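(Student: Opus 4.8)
The plan is to run the classical ``sufficient decrease, then telescope'' argument, adapted to the fact that within one epoch the blocks are updated one at a time. Fix an epoch $t$ and write $\bx^{t,i}:=[\bx^{t+1}_{1};\dots;\bx^{t+1}_{i};\bx^{t}_{i+1};\dots;\bx^{t}_{b}]$ for the iterate after the first $i$ blocks have been updated at epoch $t$, so $\bx^{t,0}=\bx^{t}$, $\bx^{t,b}=\bx^{t+1}$, and note that by construction of \cref{eq:BRGD-i} the $i$-th step is $\bs^t_i=-\tfrac{1}{\rL_i}\rgrad_i F(\bx^{t,i-1})$. First I would apply the Riemannian block descent inequality \cref{eq:rL_i} of \cref{lemma:block-Lipschitz} to that update: with $\bs_i=\bs^t_i$ the linear and quadratic terms collapse to give $F(\bx^{t,i})\le F(\bx^{t,i-1})-\tfrac{1}{2\rL_i}\|\rgrad_i F(\bx^{t,i-1})\|^{2}$. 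Summing over $i=1,\dots,b$ and using $\rL_i\le\rL^{\textnormal{max}}_b$ yields the per-epoch descent $F(\bx^{t})-F(\bx^{t+1})\ge \tfrac{1}{2\rL^{\textnormal{max}}_b}\sum_{i=1}^{b}\|\rgrad_i F(\bx^{t,i-1})\|^{2}$.

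The next (and crucial) step is to relate the ``asynchronous'' block gradients $\rgrad_i F(\bx^{t,i-1})$ appearing above to the quantity $\|\rgrad F(\bx^{t})\|^{2}=\sum_{i=1}^{b}\|\rgrad_i F(\bx^{t})\|^{2}$ that the theorem wants. For each $i$, the points $\bx^{t}$ and $\bx^{t,i-1}$ agree in block $i$ (both equal $\bx^t_i$) and differ only in the already-updated blocks $1,\dots,i-1$; since $\rgrad_i F$ is the orthogonal projection of $\nabla_i F$ onto $T_{\bx^t_i}\cM_i$ in both cases, $\|\rgrad_i F(\bx^{t})-\rgrad_i F(\bx^{t,i-1})\|\le\|\nabla_i F(\bx^{t})-\nabla_i F(\bx^{t,i-1})\|$, and the block Lipschitz smoothness of $F$ (\cref{definition:block-Lipschitz}) bounds the latter by $L^{\textnormal{max}}_b\sum_{j=1}^{i-1}\|\bx^{t+1}_j-\bx^t_j\|$. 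The retraction displacement estimate \cref{eq:alpha_i}, together with $\bs^t_j=-\rgrad_j F(\bx^{t,j-1})/\rL_j$, then gives $\|\bx^{t+1}_j-\bx^t_j\|=\|\retr_{\bx^t_j}(\bs^t_j)-\bx^t_j\|\le\alpha_j\|\bs^t_j\|\le\tfrac{\alpha^{\textnormal{max}}_b}{\rL^{\textnormal{min}}_b}\|\rgrad_j F(\bx^{t,j-1})\|$. Substituting this in, using $\|\rgrad_i F(\bx^{t})\|\le\|\rgrad_i F(\bx^{t,i-1})\|+\|\rgrad_i F(\bx^{t})-\rgrad_i F(\bx^{t,i-1})\|$, squaring, summing over $i$, and organizing the resulting Cauchy--Schwarz factors reproduces exactly the constant $C_b$ of \cref{eq:C}; combined with the per-epoch descent above this should give $\|\rgrad F(\bx^{t})\|^{2}\le b\,C_b^{2}\,(F(\bx^{t})-F(\bx^{t+1}))$.

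To finish, I would sum this over $t=0,\dots,T$, telescoping the right-hand side to $b\,C_b^{2}\,(F(\bx^{0})-F(\bx^{T+1}))$, and use that the smallest of $T+1$ nonnegative numbers is at most their average, so $(T+1)\min_{t=0,\dots,T}\|\rgrad F(\bx^{t})\|^{2}\le b\,C_b^{2}\,(F(\bx^{0})-F(\bx^{T+1}))$; dividing by $T+1$ and taking square roots produces precisely \cref{eq:BRGD-sublinear}.

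I expect the second step to be the main obstacle. The guarantee is stated for the full Riemannian gradient at the \emph{start} of each epoch, while the descent inequality only exposes block gradients at the asynchronously updated intermediate points $\bx^{t,i-1}$; closing this gap forces one to quantify how the within-epoch block displacements perturb the later block gradients and to do so while giving up only a factor polynomial in $b$. This bookkeeping --- carried out using the per-block step sizes $1/\rL_i$ and the retraction estimate \cref{eq:alpha_i} for general (not necessarily exponential-map) retractions --- is what fixes the precise form of $C_b$ in \cref{eq:C}; by contrast the per-block descent, the telescoping, and bounding the minimum by the running average are routine once \cref{lemma:block-Lipschitz} is in hand.
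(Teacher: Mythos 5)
Your proposal is correct and follows essentially the same route as the paper's proof: the per-block Riemannian descent inequality \cref{eq:rL_i} from \cref{lemma:block-Lipschitz}, a perturbation bound relating $\rgrad_i F(\bx^t)$ to the block gradients at the intermediate within-epoch points via block Lipschitz smoothness and the retraction displacement estimate \cref{eq:alpha_i}, and then telescoping over $t$ and bounding the minimum by the average. One bookkeeping remark: to recover the constant $C_b$ of \cref{eq:C} exactly, bound the displacements by $\|\bx^{t+1}_j-\bx^t_j\|\le\alpha_j\|\bs^t_j\|$ and control $\sum_{j}\|\bs^t_j\|$ directly from $\sum_j\tfrac{\rL_j}{2}\|\bs^t_j\|^2\le F(\bx^t)-F(\bx^{t+1})$ together with Cauchy--Schwarz (as the paper does in step (iv) of its display), rather than first converting $\|\bs^t_j\|$ into $\|\rgrad_j F(\bx^{t,j-1})\|/\rL^{\textnormal{min}}_b$, which would inflate the second term of $C_b$ by a factor $\sqrt{\rL^{\textnormal{max}}_b/\rL^{\textnormal{min}}_b}$.
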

\cref{theorem:RGD} assumes a constant stepsize $1/\rL_i$ for block $i$ for simplicity. When $\rL_i$ is unknown, one could use line search to get similar convergence guarantees. \cref{theorem:RGD} further assumes that $F$ is block-$i$ Lipschitz smooth with constant $L_i>0$ for every $i=1,\dots,b$. This implies $F$ is globally Lipschitz smooth in the sense that $\|\nabla F(\bx) - \nabla F(\by) \|\leq L\cdot \| \bx - \by \| $ for some constant $L$ with $0<L\leq \sum_{i=1}^b L_i$ \cite[Lemma 2]{Nesterov-SIAM-J-Opt2012}. However, in some applications, $F$ is not Lipschitz smooth block-wise, let alone globally:
\begin{example}[Geman-McClure Loss, cont.]\label{example:GM2}
    Recall \cref{example:GM} where the Geman-McClure loss \cref{eq:GM} corresponds to the regularizer $R_\rho(w_i)=(\sqrt{w_i}-1)^2$ in the outlier-robust estimation problem \cref{eq:weight+x}. We have
    \begin{equation*}
        |R_\rho'(w_i) - R_\rho'(1)| =  \Big| \frac{1}{\sqrt{w_i}} - 1 \Big|, \quad \forall w_i\in (0,1],
    \end{equation*}
    so $|R_\rho'(w_i) - R_\rho'(1)|$ tends to $\infty$ as $w\to 0$, whereas $L\cdot |w_i-1|$ is bounded above by $L$ for any finite constant $L>0$ and any $w\in(0,1]$. Hence $R_\rho(w_i)$ is not Lipschitz smooth in $w_i$, neither is the objective of \cref{eq:weight+x}.
\end{example}
In \cref{subsection:BEM+BRGD}, we will derive theorems where the block-wise Lipschitz smoothness is not needed for a given block, and in this way we dispense with the global Lipschitz smoothness assumption, making our results applicable to a broader class of problems including outlier-robust estimation with the Geman-McClure loss.

\subsection{Blending Block Exact Minimization and Riemannian Gradient Descent }\label{subsection:BEM+BRGD}
We are sometimes forced to blend block exact minimization and Riemannian gradient descent: 
\begin{example}[Projection Robust Wasserstein Distance, PRWD]\label{example:PRWD} As discussed in \cite{Huang-ICML2021}, the PRWD problem involves transporting one (discrete) distribution into another, and it has a lot of machine learning applications. It was shown that the PRWD problem can be formulated as (cf. \cite[Eq. 14]{Huang-ICML2021})
\begin{equation}\label{eq:PRWD}
    \min_{(U,v,u)\in \cM_1\times \bbR^{n_2}\times \bbR^{n_3}  } g(U,v,u),
\end{equation}
where $\cM_1$ is the Stiefel manifold. To solve \cref{eq:PRWD}, \cite{Huang-ICML2021} proposes a block coordinate descent variant, which performs exact minimization over variables $u$ and $v$ and Riemannian gradient descent over $U$. The rationale behind their proposal is that minimizing $g(U,v,u)$ in variable $U\in \cM_1$ does not admit a closed-form solution, and one then has to appeal to the manifold structure of $\cM_1$ for a descent.
\end{example}
\cref{algo:BEM} is precisely motivated by the scenario of \cref{example:PRWD} that calls for blending different block update rules. To fully simulate this scenario, we specialize \cref{algo:BEM} into \cref{algo:BEM-RGD}. \cref{algo:BEM-RGD} requires an extra input, $\cR\subset\{ 1,\dots,b-1\}$, a set of indices chosen based on a user's expertise. Blocks corresponding to indices of $\cR$ are updated by Riemannian gradient descent \cref{eq:BRGD-i}, or otherwise by exact minimization \cref{eq:BEM-i}. Since $b\notin \cR$, exact minimization occurs at least once (e.g., for updating block $b$). Finally, we note that in Line 2 of \cref{algo:BEM-RGD}, the slightly curious initialization $\bx_b^0$ is not necessary for convergence but is employed to simplify the presentation. We are now ready to state the following result:

\begin{algorithm}[t]
	\SetAlgoLined
	\DontPrintSemicolon
        Input: $\cR\subset \{ 1,\dots,b-1 \}$: indices of blocks to be updated by Riemannian gradient descent
 
	Initialize $\bx^0_{1:b-1}\in \cM_1\times \cdots\times \cM_{b-1}$; given $\bx^0_{1:b-1}$, compute $\bx^0_b$ via Exact Minimization \cref{eq:BEM-i}
	
	For $t\gets 0,1,\dots, T$: 

        \ \ \ \ \ \ For $i\gets1,\dots,b$: 
        
        \ \ \ \ \ \ \ \ \ \ \ \  If $i\in \cR$:\ \ \ \ \ \ \ \ \ \ \ \ \ \ \ \ \ \  Compute $\bx_i^{t+1}$ via Riemannian Gradient Descent \cref{eq:BRGD-i}

        \ \ \ \ \ \ \ \ \ \ \ \ Else:\ \ \ \ \ \ \ \ \ \ \ \ \ \ \ \ \ \ \ \ \ \ \ \ \  Compute $\bx_i^{t+1}$ via Exact Minimization \cref{eq:BEM-i}
        
	\caption{Blending Block Exact Minimization and Riemannian Gradient Descent } 
 \label{algo:BEM-RGD}
\end{algorithm}

\begin{theorem}\label{theorem:BEM-RGD}
    For every $i=1,\dots,b$, assume $\cM_i$ is a compact smooth submanifold of $\bbR^{n_i}$. Assume $F$ is continuously differentiable and block-$i$ Lipschitz smooth for every $i=1,\dots,b-1$ (not necessarily for $i=b$). The iterates $\{\bx^t\}_{t}$ of \cref{algo:BEM-RGD} with stepsize $\lambda^t_i=1/\rL_i$ if $i\in \cR$ satisfy
 \begin{equation}\label{eq:BEM-RGD-rate}
     \min_{t=0,\dots,T} \big\| \rgrad F(\bx^{t}) \big\| \leq \sqrt{b-1}\cdot C_{b-1}\cdot \sqrt{ \frac{F(\bx^0) - F(\bx^{T+1}) }{T+1} } \ \ \ \ \textnormal{  with $C_{b-1}$ defined in  \cref{eq:C}}.
 \end{equation}
\end{theorem}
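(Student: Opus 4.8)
The proof has three stages: a reduction that eliminates block $b$, a per-sweep sufficient-decrease estimate, and a transfer of the gradients appearing in that estimate to the genuine iterates $\bx^t$. The key observation for the reduction is that block $b$ is always updated by exact minimization \eqref{eq:BEM-i} and, being the last index, is updated \emph{last} in every sweep; moreover $\bx^0_b$ is itself produced by exact minimization in the initialization of \cref{algo:BEM-RGD}. Hence for every $t\ge 0$ the point $\bx^t_b$ globally minimizes $\bxi\mapsto F([\bx^t_{1:b-1};\bxi])$ over the manifold $\cM_b$, so the first-order optimality condition on a smooth manifold gives $\rgrad_b F(\bx^t)=0$. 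Consequently $\|\rgrad F(\bx^t)\|^2=\sum_{i=1}^{b-1}\|\rgrad_i F(\bx^t)\|^2$ for all $t\ge 0$. This is exactly why the bound is governed by $b-1$ and $C_{b-1}$, and why block-$b$ Lipschitz smoothness is not assumed: block $b$ never enters the gradient estimates below.

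\emph{Stage 2: sufficient decrease.} Fix $t$ and set $\bx^{t,i}:=[\bx^{t+1}_{1:i-1};\bx^t_i;\bx^t_{i+1:b}]$, so $\bx^{t,1}=\bx^t$, $\bx^{t,b+1}=\bx^{t+1}$, and the $i$-th inner update sends $\bx^{t,i}$ to $\bx^{t,i+1}$; write $\bg^t_i:=\rgrad_i F(\bx^{t,i})$. For each $i\le b-1$ I would show $F(\bx^{t,i+1})\le F(\bx^{t,i})-\tfrac1{2\rL_i}\|\bg^t_i\|^2$: if $i\in\cR$ this is \eqref{eq:rL_i} of \cref{lemma:block-Lipschitz} applied with $\bs_i=-\tfrac1{\rL_i}\bg^t_i$; if $i\notin\cR$, then $\bx^{t,i+1}$ has $F$-value no larger than the value at $\retr_{\bx^t_i}(-\tfrac1{\rL_i}\bg^t_i)\in\cM_i$, which is again bounded by \eqref{eq:rL_i}. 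For $i=b$ the exact-minimization update merely gives $F(\bx^{t,b+1})\le F(\bx^{t,b})$, which is all that is needed. Summing over $i$ and telescoping over $t$ yields $\sum_{t=0}^{T}\sum_{i=1}^{b-1}\|\bg^t_i\|^2\le 2\rL^{\textnormal{max}}_{b-1}\big(F(\bx^0)-F(\bx^{T+1})\big)$.

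\emph{Stage 3: from intermediate to iterate gradients.} Since $\bx^t$ and $\bx^{t,i}$ differ only in blocks $1,\dots,i-1$ (never in block $b$), and the orthogonal projection onto $T_{\bx^t_i}\cM_i$ is common to both and $1$-Lipschitz, one gets $\|\rgrad_i F(\bx^t)-\bg^t_i\|\le\|\nabla_i F(\bx^t)-\nabla_i F(\bx^{t,i})\|$, and block-wise Lipschitz smoothness of $F$ on blocks $1,\dots,b-1$ bounds the right-hand side by a multiple of $\|\bx^t-\bx^{t,i}\|=\big(\sum_{j<i}\|\bx^{t+1}_j-\bx^t_j\|^2\big)^{1/2}$. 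Each block displacement with $j\le b-1$ is in turn controlled by $\|\bg^t_j\|$ up to a universal factor of $\alpha_j/\rL_j$: for $j\in\cR$ this is \eqref{eq:alpha_i} applied to the gradient step, and for $j\notin\cR$ it follows by combining optimality of $\bx^{t+1}_j$ with \eqref{eq:rL_i} and \eqref{eq:alpha_i}. Substituting into $\|\rgrad_i F(\bx^t)\|\le\|\bg^t_i\|+\|\rgrad_i F(\bx^t)-\bg^t_i\|$, squaring, summing over $i=1,\dots,b-1$, and applying Cauchy--Schwarz (the $\ell_2$ triangle inequality over $i$, then enlarging $\sum_{j<i}$ to $\sum_{j\le b-1}$) produces a bound $\|\rgrad F(\bx^t)\|^2\le\kappa\sum_{i=1}^{b-1}\|\bg^t_i\|^2$ in which $\alpha^{\textnormal{max}}_{b-1},L^{\textnormal{max}}_{b-1},\rL^{\textnormal{max}}_{b-1},\rL^{\textnormal{min}}_{b-1}$ are arranged so that $2\rL^{\textnormal{max}}_{b-1}\,\kappa\le (b-1)\,C_{b-1}^2$ with $C_{b-1}$ as in \eqref{eq:C}. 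Combining with Stage~2 gives $\sum_{t=0}^T\|\rgrad F(\bx^t)\|^2\le (b-1)C_{b-1}^2\big(F(\bx^0)-F(\bx^{T+1})\big)$, and then $(T+1)\min_{t=0,\dots,T}\|\rgrad F(\bx^t)\|^2\le\sum_{t=0}^T\|\rgrad F(\bx^t)\|^2$ yields \eqref{eq:BEM-RGD-rate}. (This parallels the proof of \cref{theorem:RGD}, with the running count of ``active'' blocks dropping from $b$ to $b-1$.)

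\emph{Main obstacle.} The delicate step is Stage~3: transferring the descent inequality from the auxiliary points $\bx^{t,i}$ to the genuine iterates $\bx^t$. Two points require care. First, one needs a displacement bound $\|\bx^{t+1}_j-\bx^t_j\|\lesssim\|\bg^t_j\|$ for the \emph{exact-minimization} blocks, which is not automatic because a global block minimizer can lie far from the current point; \eqref{eq:rL_i} and \eqref{eq:alpha_i} must be combined with the optimality of $\bx^{t+1}_j$ to produce it. Second, one must correctly invoke block-wise Lipschitz smoothness to turn the block displacements into a bound on the mismatch $\nabla_i F(\bx^t)-\nabla_i F(\bx^{t,i})$, and then carry out the Cauchy--Schwarz bookkeeping so that the accumulated constants collapse into precisely $C_{b-1}$. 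By contrast, the Stage~1 reduction is the conceptually clean ingredient that renders block-$b$ non-smoothness harmless and distinguishes this result from \cref{theorem:RGD}.
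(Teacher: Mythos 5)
Your proposal follows essentially the same route as the paper's proof: (a) block $b$ is updated last and by exact minimization (including at initialization), so $\rgrad_b F(\bx^t)=0$ and only $b-1$ blocks contribute to $\|\rgrad F(\bx^t)\|$; (b) for $i\le b-1$ the sufficient-decrease inequality \cref{eq:descent-rgrad-s} holds because an exact minimizer does at least as well as the retracted gradient step; (c) the gradient at $\bx^t$ is transferred from the intermediate points via the telescoping/Lipschitz argument of \cref{eq:bound-rgradi}, with all constants taken over blocks $1,\dots,b-1$ so that $C_{b-1}$ from \cref{eq:C} emerges. The paper's proof is exactly these three bullets, deferring the bookkeeping to the proof of \cref{theorem:RGD}.

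One caveat on your Stage 3. You claim that for an exact-minimization block $j\in\{1,\dots,b-1\}\setminus\cR$ the displacement bound $\|\bx^{t+1}_j-\bx^t_j\|\lesssim\|\bg^t_j\|$ ``follows by combining optimality of $\bx^{t+1}_j$ with \cref{eq:rL_i} and \cref{eq:alpha_i}.'' It does not: those ingredients give an \emph{upper} bound on the objective after a retracted step and hence a lower bound on the per-block decrease, but nothing prevents the global block minimizer from lying far from $\bx^t_j$ while $\|\bg^t_j\|$ (and even the per-block decrease) is small — e.g., a block objective that is an $\epsilon$-scaled function on a compact manifold has $O(\epsilon)$ gradient and decrease but $O(1)$ displacement. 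To be fair, the paper's own write-up has the same soft spot: step (iii) of \cref{eq:bound-rgradi} bounds $\|\bx^t_j-\bx^{t+1}_j\|$ by $\alpha_j\|\bs^t_j\|$ using $\bx^{t+1}_j=\retr_{\bx^t_j}(\bs^t_j)$, which only applies to $\cR$-blocks, and the proof of \cref{theorem:BEM-RGD} simply asserts the counterpart of \cref{eq:bound-rgradi} without addressing the exact-minimization blocks that appear in the telescoping sum for $j<i$. The issue is vacuous when $b=2$ or when $\cR=\{1,\dots,b-1\}$, but in general (e.g., $\cR=\varnothing$ with $b\ge 3$) it requires an additional argument that neither you nor the paper supplies. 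Apart from correctly isolating this as the delicate step (while overclaiming its resolution), your argument matches the paper's.
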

\cref{theorem:BEM-RGD} is based on \cref{theorem:RGD}, and one of the key ideas in proving \cref{theorem:BEM-RGD} is that that exact minimization \cref{eq:BEM-i} always entails a larger (if not equal) decrease of the objective than a descent step \cref{eq:BRGD-i}. \cref{theorem:BEM-RGD} is significant as it covers several existing theoretical results as special cases and yields novel results for multiple other applications. More examples are now in order.
\begin{example}[PRWD, cont.]\label{example:PRWD-cont}
    Recall \cref{example:PRWD}. In the PRWD problem \cref{eq:PRWD}, the objective $g(U,v,u)$ is Lipschitz smooth with respect to $U$ (as proved in \cite[Lemma C.6]{Huang-ICML2021}) and also $v$ (as one could verify from the definition of $g$ in \cite{Huang-ICML2021}). Therefore, \cref{theorem:BEM-RGD} can be applied to the algorithm of \cite{Huang-ICML2021}, yielding a rate guarantee similar to \cite[Theorem 3.3]{Huang-ICML2021}. It should be noted that proving \cite[Theorem 3.3]{Huang-ICML2021} in its exact form requires application-specific details, which we will not explore there.
\end{example}
When $\cR=\varnothing$, \cref{theorem:BEM-RGD} yields a convergence rate guarantee for block exact minimization. Such guarantee can be found in \cite{Erdogdu-MP2022} and \cite{Tian-arXiv2019v4}, but both of the papers \cite{Erdogdu-MP2022,Tian-arXiv2019v4} are specifically for the \textit{Burer-Monteiro factorization} \cref{eq:BM} of some diagonally constrained semidefinite program \cref{eq:SDP}:
\begin{example}[Burer-Monteiro Factorization]\label{example:SDP}
Consider the following pair of problems:

\begin{minipage}{.49\textwidth}
  \begin{equation}
      \begin{split}
          \min_{\bX: \bX\succeq 0 } &\ \   \trace(\bC \bX)  \\ 
          \textnormal{s.t.} & \ \ [\bX]_{ii} = \bI_d, \forall i=1,\dots,n
      \end{split} \tag{SDP} \label{eq:SDP}
  \end{equation}
\end{minipage}%
\begin{minipage}{.49\textwidth}
  \begin{equation}
      \begin{split}
         &\ \  \min_{\bY: \bY=[\bY_1,\dots,\bY_n] } \ \trace(\bC \bY^\top \bY  ) \\ 
          \textnormal{s.t.} & \ \ \bY_i^\top \bY_i = \bI_d, \forall i=1,\dots,n
      \end{split} \tag{BM} \label{eq:BM}
  \end{equation}
\end{minipage}

In \cref{eq:SDP}, $[\bX]_{ii}$ denotes the submatrix of $\bX\in\bbR^{nd\times nd}$ whose columns and rows are indexed by $\{ d(i-1)+1,\dots, di \}$; i.e., $[\bX]_{ii}$ is the $i$-th block diagonal submatrix of $\bX$. Substituting $\bX=\bY^\top \bY$ into \cref{eq:SDP} with $\bY\in \bbR^{r\times nd}$ ($r\leq nd$) furnishes its \textit{Burer-Monteiro factorization} \cref{eq:BM} \cite{Burer-MP2003,Burer-MP2005}. 

\cref{algo:BEM-RGD}, minimizing one block of variables $\bY_i$ at a time, can be applied to \cref{eq:BM}, and so does \cref{theorem:BEM-RGD}. In particular, the case $d=1$ models \textit{Max-Cut} and related problems \cite{Goemans-JACM1995,Bandeira-COLT2016,Mei-COLT2017,Erdogdu-Neur2017}; applying \cref{theorem:BEM-RGD} to this case yields the same sublinear rates as \cite[Theorem 1]{Erdogdu-MP2022} (up to constants). The case $r=d>1$ models the problem of \textit{rotation averaging} \cite{Wang-IMA2013,Eriksson-TPAMI2019,Dellaert-ECCV2020,Parra-CVPR2021,Chenb-CVPR2021,Doherty-ICRA2022}, which is otherwise called \textit{rotation synchronization} \cite{Boumal-arXiv2015,Ling-SIAM-J-Opt2022,Gao-J-IMA2023,Zhu-arXiv2023}; applying \cref{theorem:BEM-RGD} yields the same sublinear rates as \cite[Theorem 1]{Tian-arXiv2019v4} (up to constants). Our theory  furthermore applies to other related problems, including \textit{permutation synchronization} \cite{Huang-CGF2013,Leonardos-ICRA2017,Birdal-CVPR2019,Li-CVPR2022,Ling-ACHA2022,Nguyen-arXiv2023}, \textit{generalized orthogonal Procrustes} \cite{Ten-1977,Pumir-J-IMA2021,Ling-ACHA2023}, \textit{generalized canonical correlation analysis} \cite{Hanafi-CSDA2006}, \cite[Example 2.2]{Lange-arXiv2021}, and \textit{orthogonal trace-sum maximization} \cite{Won-SIAM-J-MAA2021,Won-SIAM-J-Opt2022}.
\end{example}

Since \cref{algo:BEM-RGD} performs exact minimization at least once for block $b$, convergence guarantees \cref{eq:BEM-RGD-rate} can be derived without Lipschitz-type assumptions on block $b$, as \cref{theorem:BEM-RGD} indicates. This addresses an issue in \cref{example:GM2}. Also due to this, we can generate novel guarantees from \cref{theorem:BEM-RGD} for the IRLS algorithm applied to outlier-robust estimation (\cref{example:IRLS}). 

We finish with two examples where the assumptions of \cref{theorem:BEM-RGD} are not exactly fulfilled.\footnote{There appear to be 23 examples rather than what the title indicated. We found twenty-one is a good number for two reasons: Once it is reached, one could have a party for celebrations and alcohol; it echoes a Chinese saying, ``\textit{regardless of three, seven, and twenty-one}'', which means ``\textit{regardless of the consequences}'' or ``\textit{throw caution to the wind}''.  }


\begin{example}[Smoothed $\ell_1$ Loss, cont.]
    In \cref{example:smooth-L1}, the weight $w_i$ is constrained to lie $(0,\infty)$, which is not compact. This can be fixed as follows. Since the optimal weight in \cref{eq:specific-IRLS-weight} is bounded above by $1/\epsilon$, so \cref{eq:specific-IRLS} is actually equivalent to
    \begin{equation}\label{eq:specific-IRLS2}
        \min_{\bx \in \cC, w_i\in (0, 1/\epsilon] }  \sum_{i=1}^m \Big( w_i\cdot r_i^2(\bx) + \epsilon^2 \cdot w_i + \frac{1}{w_i} \Big)
    \end{equation}
    The constraint set $(0, 1/\epsilon]$ is bounded, contained in the compact set $[0,1/\epsilon]$. Furthermore, one can weaken the compactness assumption originating from \cref{lemma:block-Lipschitz} by a boundedness assumption, and revise \cref{lemma:block-Lipschitz}. With this revision, a convergence rate similar to \cref{theorem:BEM-RGD} and applicable to \cref{eq:specific-IRLS2} follows.
\end{example}
\begin{example}[Point Cloud Registration, cont.]
    Note that the constraint set $\SE(3)$ in \cref{example:PCR} is not compact, violating an assumption in \cref{theorem:BEM-RGD}. However, it is known that \cref{lemma:block-Lipschitz} holds with $\alpha_i=1$ and $\rL_i=L_i$ if $\cM_i=\bbR^{n_i}$ and $F$ is  block-$i$ Lipschitz smooth. With this fact, following the same proof idea one can verify that a convergence guarantee similar to \cref{theorem:BEM-RGD} still holds if $\cC$ is the product of a compact submanifold and the Euclidean space, e.g., $\SE(3)$. In this way, we can obtain convergence rate guarantees of IRLS applied to \cref{eq:weight+x} for the point cloud registration problem (say, with the Geman-McClure loss or smoothed $\ell_1$ loss). 
\end{example}

\section{Conclusion}\label{section:concolusion}
We laid out a framework for block coordinate descent on embedded submanifolds of the Euclidean space. Within this framework, we provided novel convergence (rate) analysis that can be specialized for multiple applications, either recovering existing results or yielding new convergence guarantees. What enabled our analysis is the formulation \cref{eq:obj} that is situated at a proper level of abstraction and maintains an under-explored balance between generality and flexibility/applicability. 

We have barely scratched the surface, and limitations exist in our current analysis. First of all, it is unlikely that the constant $C_b$, defined in \cref{eq:C} and appearing in \cref{theorem:RGD}, is optimal. It is more unlikely to be so when specializing \cref{theorem:RGD} to applications (even though the overall rate could match). Then it is also important to note that we only considered selecting blocks to minimize in a cyclic manner. While this is the most common and natural, choosing blocks in a greedy or stochastic style is also possible, and analyzing such situations is left as an extension of the present manuscript.




\bibliography{Liangzu}


\appendix
\section{Appendix}


\section{Proofs of Main Theorems}\label{section:proof-theorems}
\subsection{Proof of \cref{theorem:BEM}}
\begin{definition}\label{def:cwm}
    A point $\hat{\bx}\in\cM$ is called a \textit{coordinate-wise minimizer} of \cref{eq:obj} if
    \begin{equation}\label{eq:cwm}
        F(\hat{\bx}) \leq F( \hat{\bx}_{1:i-1}, \bxi, \hat{\bx}_{i+1} ), \ \ \ \forall \bxi\in \cM_i,\ \forall i=1,\dots,b.
    \end{equation}
\end{definition}
Below we prove a slightly stronger result than \cref{theorem:BEM}.
\begin{theorem}
    Under \cref{assumption:basic} and \cref{assumption:unique-block-min}, the sequence $\{\bx^t\}_t$ produced by \cref{algo:BEM} is bounded and has limit points. Moreover, every limit point $\hat{\bx}$ of $\{\bx^t\}_t$ is a coordinate-wise minimizer of \cref{eq:obj}. Therefore, by the differentiability of $F$ and \cref{prop:cwm->cws->s}, $\hat{\bx}$ is a stationary point of \cref{eq:obj}.
\end{theorem}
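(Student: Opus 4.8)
The plan is to extend the classical cyclic block-minimization argument (in the spirit of \cite[Proposition 2.7.1]{Bertsekas-1999}), with \cref{assumption:unique-block-min} playing the role that block-wise convexity plays in the convex case. First I would note the updates \cref{eq:BEM-i} are well-defined, since \cref{assumption:unique-block-min} guarantees each block subproblem has a (unique) minimizer. For $t\ge 0$ and $i\in\{0,\dots,b\}$ introduce the intermediate iterates $\bz^{t,i}:=[\bx^{t+1}_{1:i};\bx^t_{i+1:b}]$, so $\bz^{t,0}=\bx^t$ and $\bz^{t,b}=\bx^{t+1}$. Since each step \cref{eq:BEM-i} minimizes $F$ over one block with the others frozen, $F(\bx^t)=F(\bz^{t,0})\ge F(\bz^{t,1})\ge\cdots\ge F(\bz^{t,b})=F(\bx^{t+1})$, so $\{F(\bx^t)\}_t$ is non-increasing. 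All iterates remain in the level set $\{\bx:F(\bx)\le F(\bx^0)\}$, which is compact by \cref{assumption:basic} (in particular $F$ is bounded below there); hence $\{F(\bx^t)\}_t$ converges to some $F^\star$, and $\{\bx^t\}_t$, trapped in a compact set, is bounded and has limit points (Bolzano--Weierstrass), which lie in $\cM$ because each $\cM_i$ is closed. Sandwiching the monotone chain between $F(\bx^t)\to F^\star$ and $F(\bx^{t+1})\to F^\star$ gives $F(\bz^{t,i})\to F^\star$ for every $i$.

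The core of the argument is to show an arbitrary limit point $\hat\bx$ is a coordinate-wise minimizer (\cref{def:cwm}). Fix a subsequence with $\bx^{t_k}\to\hat\bx$; by continuity $F(\hat\bx)=F^\star$. I would prove by induction on $i=0,\dots,b$ the two statements: (a) $\bz^{t_k,i}\to\hat\bx$; and (b) for $i\ge1$, $\hat\bx_i$ minimizes $\bxi\mapsto F(\hat\bx_{1:i-1},\bxi,\hat\bx_{i+1:b})$ over $\cM_i$. The base case $i=0$ is trivial. For the step, (a) at level $i-1$ yields $\bx^{t_k+1}_j\to\hat\bx_j$ for $j<i$, and $\bx^{t_k}_j\to\hat\bx_j$ for $j\ge i$ is immediate. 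Take any convergent sub-subsequence $\bx^{t_{k_l}+1}_i\to\bar\bx_i\in\cM_i$ (possible since it is $\cM_i$-valued and bounded); passing to the limit in the optimality inequality
\[
F\big(\bx^{t_{k_l}+1}_{1:i-1},\bx^{t_{k_l}+1}_i,\bx^{t_{k_l}}_{i+1:b}\big)\le F\big(\bx^{t_{k_l}+1}_{1:i-1},\bxi,\bx^{t_{k_l}}_{i+1:b}\big)\quad(\forall\,\bxi\in\cM_i)
\]
and using continuity of $F$ shows $\bar\bx_i$ minimizes $\bxi\mapsto F(\hat\bx_{1:i-1},\bxi,\hat\bx_{i+1:b})$. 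Separately, $F(\hat\bx_{1:i-1},\bar\bx_i,\hat\bx_{i+1:b})=\lim_l F(\bz^{t_{k_l},i})=F^\star=F(\hat\bx)=F(\hat\bx_{1:i-1},\hat\bx_i,\hat\bx_{i+1:b})$, so $\hat\bx_i$ attains the same (minimal) value and is itself a minimizer of that block problem; \cref{assumption:unique-block-min} then forces $\bar\bx_i=\hat\bx_i$. Since every convergent sub-subsequence of the bounded sequence $\{\bx^{t_k+1}_i\}_k$ has limit $\hat\bx_i$, the full subsequence satisfies $\bx^{t_k+1}_i\to\hat\bx_i$, which with the hypothesis gives $\bz^{t_k,i}\to\hat\bx$; and (b) was established along the way. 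Ranging $i$ over $1,\dots,b$ yields $F(\hat\bx)\le F(\hat\bx_{1:i-1},\bxi,\hat\bx_{i+1:b})$ for all $\bxi\in\cM_i$ and all $i$, i.e., $\hat\bx$ is a coordinate-wise minimizer. Combining with differentiability of $F$, the last assertion (stationarity) follows from \cref{prop:cwm->cws->s}.

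I expect the inductive step to be the one subtle point: in the fully non-convex regime there is no a priori continuity or single-valuedness of the block-$\argmin$ maps, so convergence of the \emph{intermediate} iterates $\bx^{t_k+1}_i\to\hat\bx_i$ is not free — this is exactly where \cref{assumption:unique-block-min} is indispensable, and the device is to pin down the subsequential limit $\bar\bx_i$ by matching objective values (both equal $F^\star$) rather than by any stability property of the minimizer. Everything else (monotonicity, compactness, the subsequence extractions, and the passages to the limit) is routine modulo bookkeeping of the block indices.
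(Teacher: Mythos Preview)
Your proposal is correct and takes essentially the same route as the paper's proof: monotonicity of the objective along the chain of intermediate iterates, compactness of the level set to extract limit points, and an induction over the blocks that uses \cref{assumption:unique-block-min} to identify each sub-subsequential limit $\bar\bx_i$ with $\hat\bx_i$. The only cosmetic difference is that the paper passes once to a common subsequence along which all the intermediate iterates $[\bx^{t_j+1}_{1:i-1};\bx^{t_j}_{i:b}]$ converge (to some $[\hat\by_{1:i-1};\hat\bx_{i:b}]$) and then shows $\hat\by_i=\hat\bx_i$, whereas you keep the original subsequence fixed and use the ``every convergent sub-subsequence has the same limit'' device; both are standard and equivalent here.
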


\begin{proof}
By \cref{eq:BEM-i}, we have
\begin{equation}\label{eq:non-increasing}
    F(\bx^{t+1}) = F(\bx^{t+1}_{1:b})\leq  F(\bx^{t+1}_{1:b-1}, \bx^{t}_b ) \leq \cdots \leq F(\bx^{t+1}_1, \bx^{t}_{2:b}) \leq F(\bx^{t}_{1:b})= F(\bx^t).
\end{equation}
Since $F$ is continuous, its level set is closed \cite[Theorems 2.6 and 2.8]{Beck-OptBook2017}. Since the level set of $F$ is bounded by \cref{assumption:basic}, it is compact, and \cref{eq:non-increasing} implies $F(\bx^t)$ converges to some real number say $\hat{F}$ \cite[Theorem 2.12]{Beck-OptBook2017}. In fact, \cref{eq:non-increasing} implies $F(\bx^{t+1}_{1:i}, \bx^{t}_{i+1:b})$ also converges to $\hat{F}$, for every $i$.

The boundedness of the level set and \cref{eq:non-increasing} implies $\{\bx^t\}_t$ is a bounded sequence. It now follows that $\{\bx^t\}_t$ has a convergent subsequence; let $\{ \bx^{t_j} \}_j$ be such subsequence convergent to some limit point, say $\hat{\bx}=\hat{\bx}_{1:b}$. Passing to subsequences if necessary, we assume that for every $i=1,\dots,b$ the sequence $\{ [\bx^{t_j+1}_{1:i-1}; \bx^{t_j}_{i:b}] \}_j$ converges to some vector $[\hat{\by}_{1:i-1};\hat{\bx}_{i:b}]$. Since $\cM$ is closed, we know $\hat{\bx}\in \cM$ and $[\hat{\by}_{1:i-1};\hat{\bx}_{i:b}]\in \cM$. In particular, we have $F(\hat{\by}_{1:i-1}, \hat{\bx}_{i:b})=\hat{F}$ for every $i$.

Next, we prove $\hat{\by}_i= \hat{\bx}_i$ for every $i$. For the case $i=1$, it follows from \cref{eq:BEM-i} that
\begin{equation*}
    F(\bx^{t_j+1}_1, \bx^{t_j}_{2:b}) \leq  F( \bxi,\bx^{t_j}_{2:b}), \ \ \ \forall \bxi\in \cM_1.
\end{equation*}
Since $F$ is continuous, with $t_j\to\infty$ the above yields
\begin{equation*}
  F(\hat{\bx}) =  \hat{F} =  F(\hat{\by}_1,\hat{\bx}_{2:b}) \leq F(\bxi, \hat{\bx}_{2:b}), \ \ \  \forall \bxi\in \cM_1.
\end{equation*}
Since $F(\bxi, \hat{\bx}_{2:b})$ as a function of $\bxi$ has a unique minimizer over $\cM_1$ attaining the value $\hat{F}$ (\cref{assumption:unique-block-min}), it can only be that $\hat{\by}_1=\hat{\bx}_1$. For the case $i=2$, we get for every $\forall \bxi\in \cM_2$ that
\begin{equation*}
    \begin{split}
        F(\hat{\bx})=\hat{F} &= F( \hat{\bx}_1,\hat{\by}_2,\hat{\bx}_{3:b} ) \\
        &=F( \hat{\by}_1,\hat{\by}_2,\hat{\bx}_{3:b} ) \\
        &\leq F(  \hat{\by}_1,\bxi,\hat{\bx}_{3:b} ) \\
        &\leq F( \hat{\bx}_1,\bxi,\hat{\bx}_{3:b} )
    \end{split}
\end{equation*}
which together with \cref{assumption:unique-block-min} implies $\hat{\bx}_2=\hat{\by}_2$. Proceeding similarly, we can prove $\hat{\bx}_i=\hat{\by}_i$  for every $i$ and
\begin{equation*}
    F(\hat{\bx}) \leq F(\hat{\bx}_{1:i-1},  \bxi, \hat{\bx}_{i+1:b}), \ \ \  \forall \bxi\in \cM_i.
\end{equation*}
Thus $\hat{\bx}$ is a coordinate-wise minimizer of \cref{eq:obj}. Since $F$ is differentiable, \cref{prop:cwm->cws->s} implies $\hat{\bx}$ is also a stationary point of \cref{eq:obj}. The proof is therefore complete.
\end{proof}

\subsection{Proof of \cref{theorem:BMM}}\label{subsection:proof:BMM}
\begin{proof}
    With the definitions $\bq_i^t:=[\bx^{t+1}_{1:i-1}; \bx^t_{i:b}]$ of \cref{eq:BMM-i}  and $G_i$ we have for any $t$ that
    \begin{equation}\label{eq:non-increasing-G}
        \begin{split}
            F(\bx^{t+1}) &\leq G_{b}(\bx^{t+1}_b, \bq^t_b) \\
            &\leq G_{b}(\bx^t_b, \bq^t_b) \\
            &= F(\bq^{t}_b) \\ 
            &\leq G_{b-1}( \bx^t_{b-1},\bq^t_{b-1} ) \\
            &\leq \cdots \\ 
            &= F(\bx^t)
        \end{split}
    \end{equation}
    Therefore, $\{F(\bx^t)\}_t$ is a non-increasing sequence. Similarly to the proof of \cref{theorem:BEM}, we can show under \cref{assumption:basic} that, as $t\to \infty$,  $F(\bq^t_i)$ converges to $\hat{F}$ for every $i$. Moreover, as shown similarly in the proof of \cref{theorem:BEM}, $\{\bx^t\}_t$ is bounded, and it has a convergent subsequence $\{ \bx^{t_j} \}_j$ such that  $\{ \bq^{t_j}_i \}_j=\{ [\bx^{t_j+1}_{1:i-1}; \bx^{t_j}_{i:b}] \}_j$ converges to some vector $\hat{\bq}_i:=[\hat{\by}_{1:i-1};\hat{\bx}_{i:b}]$ for every $i$.

    Note that $F$ and $G_i$ are continuous. Replacing $t$ with $t_j\to\infty$ in \cref{eq:non-increasing-G}, we get for every $i$ that
    \begin{equation}\label{eq:tmp-1231243544235}
        \hat{F} = G_i(\hat{\by}_i, \hat{\bq}_i) = G_i(\hat{\bx}_i, \hat{\bq}_i).
    \end{equation}
    On the other hand, \cref{eq:BMM-i} implies $G_i( \bx^{t_j+1}_i, \bq^{t_j}_i ) \leq G_i( \bxi, \bq^{t_j}_i ) $ for every $\bxi \in \cM_i$, which with $t_j\to \infty$ yields $G_i( \hat{\by}_i, \hat{\bq}_i ) \leq G_i( \bxi, \hat{\bq}_i ) $ for every $\bxi \in \cM_i$. With \cref{eq:tmp-1231243544235}, we see that $\hat{\bx}_i$ and $\hat{\by}_i$ are both global minimizers of $G_i(\bxi, \hat{\bq}_i)$ in variable $\bxi$ over $\cM_i$. It is now only possible that $\hat{\bx}_i=\hat{\by}_i$ by our assumption of a unique global minimizer of $G_i$; therefore $\hat{\bq}_i = \hat{\bx}$ for every $i$. We can now conclude for  every $i=1,\dots,b$ that
    \begin{equation*}
        \begin{split}
            &\ G_i( \hat{\bx}_i, \hat{\bx} ) \leq G_i( \bxi, \hat{\bx} ), \ \  \ \  \ \  \ \  \ \  \ \  \ \  \ \  \ \ \ \forall \bxi \in \cM_i \\ 
        \Rightarrow&\ \ \langle \nabla_{\bx_i} G_i(\bx_i, \hat{\bx})|_{\bx_i= \hat{\bx}_i }, \bv_i \rangle \geq 0,\ \ \  \forall \bv_i\in T_{\cM_i}(\hat{\bx}_i)\\ 
        \Leftrightarrow&\  \ \langle \nabla_i F(\hat{\bx}), \bv_i \rangle \geq 0,\ \ \ \ \  \ \  \ \  \ \  \ \  \ \  \ \  \ \ \ \  \ \ \forall \bv_i\in T_{\cM_i}(\hat{\bx}_i) 
        \end{split}
    \end{equation*}
    The last equivalence is due to the definition of block-wise majorizers (\cref{definition:BM}). Hence $\hat{\bx}$ is a coordinate-wise stationary point (\cref{def:cws}) of \cref{eq:obj} and thus a stationary point (\cref{prop:cwm->cws->s}).
\end{proof}

\subsection{Proof of \cref{theorem:RGD}}
\begin{proof}
Note that the gradient of $F$ is block-$i$ Lipschitz continuous for every $i$, so $F$ has Lipschitz smooth (globally), and therefore $F$ is continuously differentiable. Furthermore, since $\cM_i$ is compact, the conditions of \cref{lemma:block-Lipschitz} are fulfilled. we have for every $i=1,\dots,b$ that
    \begin{equation}\label{eq:descent-rgrad-s}
        \begin{split}
            F(\bx^{t+1}_{1:i}, \bx^t_{i+1:b})
        &=F(\bx^{t+1}_{1:i-1}, \retr_{\bx_i^{t}}(\bs^t_i), \bx^t_{i+1:b}) \\
        &\overset{ \textnormal{(i)} }{\leq} F(\bx^{t+1}_{1:i-1}, \bx^t_{i:b}) + \langle \rgrad_i F(\bx^{t+1}_{1:i-1}, \bx^t_{i:b}), \bs^t_i \rangle + \frac{\rL_i}{2} \cdot \| \bs^t_i \|^2 \\
        &\overset{ \textnormal{(ii)} }{=} F(\bx^{t+1}_{1:i-1}, \bx^t_{i:b}) - \frac{\rL_i}{2} \cdot \big\| \bs^t_i \big\|^2 \\
        &\overset{ \textnormal{(iii)} }{=}  F(\bx^{t+1}_{1:i-1}, \bx^t_{i:b}) - \frac{1}{2\rL_i} \cdot \big\| \rgrad_i F(\bx^{t+1}_{1:i-1}, \bx^t_{i:b}) \big\|^2
        \end{split}
    \end{equation}
Here, (i) follows from \cref{lemma:block-Lipschitz}, and (ii) and (iii) follow from \cref{algo:BEM} with stepsize $\lambda_i^t:=1/\rL_i$, which defines $\bs^t_i:=-\lambda_i^t\rgrad_i F(\bx^{t+1}_{1:i-1}, \bx^t_{i:b})$. In particular, \cref{eq:descent-rgrad-s} implies $F(\bx^{t+1}_{1:i}, \bx^t_{i+1:b}) \leq F(\bx^{t+1}_{1:i-1}, \bx^t_{i:b})$ for every $i=1,\dots,b$ and $t$, and therefore
\begin{equation}\label{eq:diff<diff}
    \begin{cases}
        \frac{\rL_i}{2} \cdot \big\| \bs^t_i \big\|^2  \\ 
        \frac{1}{2\rL_i} \cdot \big\| \rgrad_i F(\bx^{t+1}_{1:i-1}, \bx^t_{i:b}) \big\|^2 
    \end{cases} \leq  F(\bx^{t+1}_{1:i-1}, \bx^t_{i:b}) - F(\bx^{t+1}_{1:i}, \bx^t_{i+1:b}) \leq F(\bx^t) - F(\bx^{t+1}).
\end{equation}

On the other hand, note that we have the following basic equality:
\begin{equation}\label{eq:telegram-equality}
    \nabla_i F(\bx^{t}) = \nabla_i F(\bx^{t+1}_{1:i-1}, \bx^t_{i:b}) + \sum_{j=1}^i \Big( \nabla_i F(\bx^{t+1}_{1:j-1}, \bx^t_{j:b}) - \nabla_i F(\bx^{t+1}_{1:j}, \bx^t_{j+1:b})  \Big)
\end{equation}
Let $\proj_S(\cdot)$ be the operator that projects a point onto set $S$. Then it holds that
    \begin{equation}\label{eq:bound-rgradi}
        \begin{split}
            \big\| \rgrad_i F(\bx^{t}) \big\| &=  \big\| \proj_{T_{\bx_i}\cM_i } \nabla_i F(\bx^{t}) \big\|  \\ 
            &\overset{\textnormal{(i)} }{ \leq } \big\| \rgrad_i F(\bx^{t+1}_{1:i-1}, \bx^t_{i:b}) \big\| + \sum_{j=1}^i \big\| \nabla_i F(\bx^{t+1}_{1:j-1}, \bx^t_{j:b}) - \nabla_i F(\bx^{t+1}_{1:j}, \bx^t_{j+1:b})  \big\| \\ 
            &\overset{\textnormal{(ii)} }{ \leq } \big\| \rgrad_i F(\bx^{t+1}_{1:i-1}, \bx^t_{i:b}) \big\| + \sum_{j=1}^i L_j \cdot \| \bx^t_j - \bx^{t+1}_j \| \\ 
            &= \big\| \rgrad_i F(\bx^{t+1}_{1:i-1}, \bx^t_{i:b}) \big\| + \sum_{j=1}^i L_j \cdot \| \bx^t_j - \retr_{\bx^t_j}(\bs^t_j) \| \\ 
            &\overset{\textnormal{(iii)} }{ \leq }  \big\| \rgrad_i F(\bx^{t+1}_{1:i-1}, \bx^t_{i:b}) \big\| +  \alpha^\textnormal{max}_b L^\textnormal{max}_b \cdot \sum_{j=1}^i \| \bs^t_j \| \\ 
            &\overset{\textnormal{(iv) }  }{ \leq } \sqrt{ 2\rL_i \Big( F(\bx^t) - F(\bx^{t+1}) \Big) } + \alpha^\textnormal{max}_b L^\textnormal{max}_b \cdot \sqrt{ \frac{2i}{\rL^\textnormal{min}_b }  \Big( F(\bx^t) - F(\bx^{t+1}) \Big) } \\ 
            &\overset{ \textnormal{(v)} }{ \leq }  C_b \cdot \sqrt{F(\bx^t) - F(\bx^{t+1}) }
        \end{split}
    \end{equation}
In the above, (i) follows from \cref{eq:telegram-equality}, the triangle inequality, and the fact that the projection $\proj_{T_{\bx_i}\cM_i }(\cdot)$ never increases the norm of its input, (ii) follows from the assumption that $F$ is block-$i$ Lipschitz smooth, (iii) follows from \cref{lemma:block-Lipschitz} and the definitions of $\alpha^\textnormal{max}_b$ and $L^\textnormal{max}_b$, (iv) follows from \cref{eq:diff<diff} and the definition of $\rL^\textnormal{min}_b $, (v) follows from the definition of $C_b$ in \cref{eq:C} and the fact $i\leq b$. Then we have
\begin{equation*}
    \big\| \rgrad F(\bx^{t}) \big\|^2 = \sum_{i=1}^b \big\| \rgrad_i F(\bx^{t}) \big\|^2 \leq  b\cdot C^2_b \cdot \big(F(\bx^t) - F(\bx^{t+1}) \big).
\end{equation*}

Summing the above over $t=0,\dots,T$, we get 
\begin{equation}
    \sum_{t=0}^T \big\| \rgrad F(\bx^{t}) \big\|^2 \leq b\cdot C^2_b \cdot \big(F(\bx^0) - F(\bx^{T+1}) \big),
\end{equation}
from which the conclusion \cref{eq:BRGD-sublinear} follows. 
\end{proof}

\subsection{Proof of \cref{theorem:BEM-RGD}}
\begin{proof}
    The proof follows from that of \cref{theorem:RGD} with the following modifications:
    \begin{itemize}
        \item Note that $F$ is block-wise Lipschitz smooth for the first $b-1$ blocks, and that exact minimization yields a smaller or same objective value than Riemannian gradient descent, therefore \cref{eq:descent-rgrad-s} holds for the iterates of \cref{algo:BEM-RGD} for every $i=1,\dots,b-1$. 
        \item Then, following the proof of \cref{theorem:RGD}, we get a counterpart of \cref{eq:bound-rgradi}, that is
        \begin{equation}\label{eq:bound-rgradi-b-1}
            \big\| \rgrad_i F(\bx^{t}) \big\| \leq C_{b-1} \cdot \sqrt{F(\bx^t) - F(\bx^{t+1})},\ \ \ \  \forall i=1,\dots,b-1.
        \end{equation}
        \item Finally, since \cref{algo:BEM-RGD} performs exact minimization for block $b$, we have $\big\| \rgrad_b F(\bx^{t}) \big\|=0$ for every $t$ (note how $\bx^0_b$ is initialized). and therefore
        \begin{equation}
            \big\| \rgrad F(\bx^{t}) \big\|^2 = \sum_{i=1}^{b-1} \big\| \rgrad_i F(\bx^{t}) \big\|^2 \leq  (b-1)\cdot C^2_{b-1} \cdot \big(F(\bx^t) - F(\bx^{t+1}) \big).
        \end{equation}
        Summing the above over $t=0,\dots,T$ finishes the proof. \qedhere
    \end{itemize}
\end{proof}

\section{Auxiliary Results}\label{section:proof-auxiliary}
\subsection{Proof of \cref{lemma:block-Lipschitz}}
\begin{proof}
    
    
    Note that \cref{eq:alpha_i} was proved in Appendix B of \cite{Boumal-IMA-J-NA2019}, where it was also shown that
    \begin{equation}
        \| \retr_{\bxi}(\bs_i) - \bxi - \bs_i \| \leq \beta_i \cdot \| \bs_i \|^2
    \end{equation}
    for some $\beta_i>0$. Next, we prove \cref{eq:rL_i}. Since $F$ is block-$i$ Lipschitz smooth with constant $L_i$, we have (see, e.g., \cite[Lemma 1.2.3]{Nesterov-2018})
    \begin{equation*}
        F(\bx_{1:i-1},\bzeta,\bx_{i+1:b}) \leq F(\bx_{1:i-1},\bxi,\bx_{i+1:b}) + \langle \nabla_i F(\bx_{1:i-1},\bxi,\bx_{i+1:b}), \bzeta - \bxi \rangle + \frac{L_i}{2} \| \bzeta - \bxi \|^2
    \end{equation*}
    for every $[\bx_{1:i-1}; \bxi; \bx_{i+1:b}]\in\cM$ and every $\bzeta\in\cM_i$ (see, e.g., \cite[Lemma 1.2.3]{Nesterov-2018}). This in particular holds true for $\bzeta=\retr_{\bxi}(\bs_i)$ for every $\bs_i\in T_{\bx_i}\cM_i$. On the other hand, we have
    \begin{equation*}
        \begin{split}
            &\ \langle \nabla_i F(\bx_{1:i-1},\bxi,\bx_{i+1:b}), \retr_{\bxi}(\bs_i) - \bxi \rangle \\ 
            =&\ \langle \nabla_i F(\bx_{1:i-1},\bxi,\bx_{i+1:b}), \retr_{\bxi}(\bs_i) - \bxi + \bs_i - \bs_i \rangle \\ 
            =&\  \langle \rgrad_i F(\bx_{1:i-1},\bxi,\bx_{i+1:b}), \bs_i \rangle +  \langle \nabla_i F(\bx_{1:i-1},\bxi,\bx_{i+1:b}), \retr_{\bxi}(\bs_i) - \bxi - \bs_i \rangle 
        \end{split}
    \end{equation*}
    Combining the above yields
    \begin{equation*}
        \begin{split}
            &\ F(\bx_{1:i-1},\retr_{\bxi}(\bs_i),\bx_{i+1:b}) - F(\bx_{1:i-1},\bxi,\bx_{i+1:b}) - \langle \rgrad_i F(\bx_{1:i-1},\bxi,\bx_{i+1:b}),  \bs_i \rangle \\ 
            \leq&\   \| \nabla_i F(\bx_{1:i-1},\bxi,\bx_{i+1:b})\| \cdot \| \retr_{\bxi}(\bs_i) - \bxi - \bs_i \|  +  \frac{L_i}{2} \| \retr_{\bxi}(\bs_i)- \bxi \|^2  \\ 
            \leq &\ \| \nabla_i F(\bx_{1:i-1},\bxi,\bx_{i+1:b})\| \cdot \beta_i \cdot \| \bs_i \|^2 + \frac{L_i}{2} \cdot \alpha_i^2 \| \bs_i \|^2 \\ 
            = & \ \big(  \| \nabla_i F(\bx_{1:i-1},\bxi,\bx_{i+1:b})\| \cdot \beta_i + \frac{L_i}{2} \cdot \alpha_i^2 \big) \cdot  \| \bs_i \|^2
        \end{split}
    \end{equation*}
    Since $F$ is continuously differentiable and $\cM_i$'s are compact, the gradient $\| \nabla F(\bx_{1:i-1},\bxi,\bx_{i+1:b})\| $ is bounded above by some finite constant value. Therefore $\| \nabla_i F(\bx_{1:i-1},\bxi,\bx_{i+1:b})\|$ is also bounded above, at least by the same constant. The proof is now complete.
\end{proof}

\subsection{\cref{lemma:tangent-cone-inclusion} and Its Proof}
\begin{lemma}\label{lemma:tangent-cone-inclusion}
    Assume each $\cM_i$ is a closed subset of $\bbR^{n_i}$. With $\bx_i\in\cM_i$ it holds that
    \begin{equation*}
        T_\cM(\bx_{}) \subset T_{\cM_1} (\bx_1)\times \cdots \times T_{\cM_b} (\bx_b).
    \end{equation*}
\end{lemma}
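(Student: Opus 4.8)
The plan is to unwind the definition of the tangent cone (\cref{def:tangent-cone}) and to use that convergence in $\bbR^{n_1+\cdots+n_b}$ is equivalent to block-wise convergence. First observe that the statement is well-posed: $\cM=\cM_1\times\cdots\times\cM_b$ is a product of closed sets, hence closed, so $T_\cM(\bx)$ is defined; and each $T_{\cM_i}(\bx_i)$ is defined because $\cM_i$ is closed and $\bx_i\in\cM_i$.

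Next I would fix an arbitrary $\bv=[\bv_1;\dots;\bv_b]\in T_\cM(\bx)$. By \cref{def:tangent-cone} there exist sequences $\{\bx^j\}_j\subset\cM$ and $\{\lambda_j\}_j\subset(0,\infty)$ with $\bx^j\to\bx$, $\lambda_j\to 0$, and $(\bx^j-\bx)/\lambda_j\to\bv$. Writing $\bx^j=[\bx_1^j;\dots;\bx_b^j]$, the product structure of $\cM$ forces $\bx_i^j\in\cM_i$ for every $i$ and every $j$.

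Then, for each fixed block index $i$, I would read off the $i$-th block of these three limits: $\bx_i^j\to\bx_i$, $\lambda_j\to 0$, and $(\bx_i^j-\bx_i)/\lambda_j\to\bv_i$, using that a sequence of concatenated vectors converges if and only if each of its blocks converges. These are precisely the conditions of \cref{def:tangent-cone} certifying $\bv_i\in T_{\cM_i}(\bx_i)$, with the \emph{same} scalar sequence $\{\lambda_j\}_j$ working for all blocks at once. Since $i$ is arbitrary, $\bv\in T_{\cM_1}(\bx_1)\times\cdots\times T_{\cM_b}(\bx_b)$, and since $\bv$ is arbitrary the claimed inclusion follows.

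There is essentially no obstacle here; the only thing worth stating carefully is the bookkeeping point that a single pair of witnessing sequences $(\bx^j,\lambda_j)$ simultaneously certifies membership of every block, so no diagonal extraction or passing to subsequences is needed. (The reverse inclusion is not claimed and can fail in general, which is why the lemma is phrased as a one-sided inclusion; we do not need to address this.)
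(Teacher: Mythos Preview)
Your proposal is correct and follows essentially the same approach as the paper: pick a tangent vector $\bv\in T_\cM(\bx)$, take the witnessing sequences from \cref{def:tangent-cone}, read off the block components, and observe that each block satisfies the definition of $T_{\cM_i}(\bx_i)$. The paper's proof is slightly terser but otherwise identical in structure and content.
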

\begin{proof}[Proof of \cref{lemma:tangent-cone-inclusion}]
    This result appears in \cite[Prop. 6.4.1]{Rockafellar-2009}; its proof is omitted as the authors wrote it is ``\textit{an elementary consequence}'' of \cref{def:tangent-cone}. Here we give a proof for completeness. Suppose $\bv=\bv_{1:b}\in T_{\cM}(\hat{\bx})$, then there exist some sequence $\{\ba_j\}_j\subset \cM$ and $\{\lambda_j\}_j\subset (0,\infty)$, such that 
    \begin{equation*}
     \lim_{j\to\infty} \ba_j = \hat{\bx},\ \ \ \ \lim_{j\to\infty} \lambda_j=0 ,\ \ \ \ \lim_{j\to\infty} \frac{\ba_j - \hat{\bx}}{\lambda_j} = \bv.
    \end{equation*}
    Write $\ba_j:=[\ba_{j1};\dots;\ba_{jb}]$. The above implies coordinate-wise limits, i.e., we have ($\forall i=1,\dots,b$)
    \begin{equation*}
        \lim_{j\to\infty} \ba_{ji} = \hat{\bx}_i,\ \ \ \ \lim_{j\to\infty} \lambda_j=0 ,\ \ \ \ \lim_{j\to\infty} \frac{\ba_{ji} - \hat{\bx}}{\lambda_j} = \bv_i.
    \end{equation*}
    Thus $\bv_i\in T_{\cM_i}(\hat{\bx}_i)$ for every $i=1,\dots,b$ and therefore $\bv\in T_{\cM_1} (\hat{\bx}_1) \times \cdots \times T_{\cM_b} (\hat{\bx}_b)$.
\end{proof}
\subsection{\cref{prop:cwm->cws->s} and Its Proof}
\begin{definition}\label{def:cws}
We call $\hat{\bx}\in\cM$ a \textit{coordinate-wise stationary point} of \cref{eq:obj} if
\begin{equation}\label{eq:cws}
    \langle \nabla_i F(\hat{\bx}), \bv_i \rangle \geq 0,\ \ \  \forall \bv_i\in T_{\cM_i}(\hat{\bx}_i),\ \forall i=1,\dots,b. 
\end{equation}
\end{definition}
\begin{prop}\label{prop:cwm->cws->s}
Suppose \cref{assumption:basic} holds and $\hat{\bx}$ is a coordinate-wise minimizer of \cref{eq:obj}. Then it is a coordinate-wise stationary point of \cref{eq:obj}, which implies $\hat{\bx}$ is a stationary point of \cref{eq:obj}.
\end{prop}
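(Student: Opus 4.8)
The plan is to establish the two implications in \cref{prop:cwm->cws->s} in turn: a coordinate-wise minimizer is a coordinate-wise stationary point, and a coordinate-wise stationary point is a stationary point.

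First, I would prove that a coordinate-wise minimizer $\hat{\bx}$ is a coordinate-wise stationary point. Fix a block index $i\in\{1,\dots,b\}$ and consider the restricted function $\bxi\mapsto F(\hat{\bx}_{1:i-1},\bxi,\hat{\bx}_{i+1:b})$ on the closed set $\cM_i\subset\bbR^{n_i}$. Since $\hat{\bx}$ is a coordinate-wise minimizer (\cref{def:cwm}), $\hat{\bx}_i$ is a global, hence local, minimizer of this restricted problem, which is itself an instance of \cref{eq:obj} with a differentiable objective over a closed set. Applying the first-order optimality condition \cref{eq:optimality-condition} (i.e.\ \cite[Theorem 6.12]{Rockafellar-2009}) to this single-block problem, and noting that the Euclidean gradient of the restricted function at $\hat{\bx}_i$ is exactly $\nabla_i F(\hat{\bx})$, gives $\langle \nabla_i F(\hat{\bx}),\bv_i\rangle\geq 0$ for every $\bv_i\in T_{\cM_i}(\hat{\bx}_i)$. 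Since $i$ was arbitrary, this is precisely \cref{eq:cws}, so $\hat{\bx}$ is a coordinate-wise stationary point.

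Next, I would prove that a coordinate-wise stationary point $\hat{\bx}$ is a stationary point. Let $\bv=\bv_{1:b}\in T_{\cM}(\hat{\bx})$ be arbitrary. By \cref{lemma:tangent-cone-inclusion} we have $T_{\cM}(\hat{\bx})\subset T_{\cM_1}(\hat{\bx}_1)\times\cdots\times T_{\cM_b}(\hat{\bx}_b)$, so $\bv_i\in T_{\cM_i}(\hat{\bx}_i)$ for each $i$. Summing \cref{eq:cws} over $i$ then yields
\[
\langle \nabla F(\hat{\bx}),\bv\rangle=\sum_{i=1}^b \langle \nabla_i F(\hat{\bx}),\bv_i\rangle\geq 0.
\]
As $\bv\in T_{\cM}(\hat{\bx})$ was arbitrary, $\hat{\bx}$ satisfies \cref{eq:optimality-condition}, hence is a stationary point of \cref{eq:obj} in the sense of \cref{definition:stationary}.

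I do not expect a genuine obstacle here; the argument is mostly bookkeeping. The one point requiring a little care is verifying that the block subproblem really falls under the scope of \cref{eq:optimality-condition}: one must invoke that $\cM_i$ is closed (granted by \cref{assumption:basic}) and that fixing the remaining coordinates leaves a differentiable function of $\bxi$ whose gradient at $\hat{\bx}_i$ is $\nabla_i F(\hat{\bx})$. The other subtlety is merely keeping track of the direction of the inclusion in \cref{lemma:tangent-cone-inclusion} — it maps $T_{\cM}(\hat{\bx})$ \emph{into} the product of the blockwise tangent cones, which is exactly what is needed to assemble full stationarity from coordinate-wise stationarity (the reverse inclusion would be useless and in fact can fail for nonsmooth $\cM_i$).
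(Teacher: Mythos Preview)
Your proposal is correct and follows essentially the same approach as the paper's proof: apply the first-order optimality condition \cref{eq:optimality-condition} blockwise to obtain coordinate-wise stationarity, then sum over blocks and invoke \cref{lemma:tangent-cone-inclusion} to conclude full stationarity. The paper's proof is terser (it simply asserts the first implication without spelling out the application of \cite[Theorem~6.12]{Rockafellar-2009} to the single-block problem), but your added detail and your remarks on the direction of the inclusion are accurate.
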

\begin{proof}
    Since $\hat{\bx}=\hat{\bx}_{1:b}$ is a coordinate-wise minimizer, it is also coordinate-wise stationary, satisfying \cref{eq:cws}. Summing \cref{eq:cws} up, we get $\langle \nabla F(\hat{\bx}), \bv \rangle \geq 0$ for every $\bv\in T_{\cM_1}(\hat{\bx}_1)\times \cdots \times T_{\cM_b}(\hat{\bx}_b)$. Since $T_{\cM_1}(\hat{\bx}_1)\times \cdots \times T_{\cM_b}(\hat{\bx}_b)$ contains $T_{\cM}(\hat{\bx})$ (cf.  \cref{lemma:tangent-cone-inclusion}), we finished the proof.
\end{proof}

\section{Block Majorization Minimization and \cref{algo:BEM} }\label{section:BMM-rate}
In the main paper, we proved the asymptotic convergence of block majorization minimization (\cref{theorem:BMM}). Here, we prove a convergence rate for the same algorithm; similar rate guarantees have been derived in \cite{Hong-MP2017} for the case where both $F$ and $\cM_i$'s are convex, but we will proceed without any convexity assumptions. 

\begin{assumption}\label{assumption:majorizer-LGi}
    For each $i=1,\dots,b$, $F$ admits a block-$i$ majorizer $G_i$. Moreover, $G_i$ is Lipschitz smooth with constant $L(G_i)>0$, that is $G_i$ satisfies
     \begin{equation}
         \|\nabla G_i(\bxi,\bz) - \nabla G_i(\bzeta,\bz) \| \leq L(G_i) \cdot \| \bxi - \bzeta \|, \ \ \ \forall \bz\in \cM, \ \forall \bxi\in \cM_i, \forall \bzeta\in \cM_i,
     \end{equation}
     where $\nabla G_i(\bxi,\bz)$ denotes the gradient of $G_i$ with respect to its first parameter $\bxi$.
\end{assumption}
We then reach a corollary of \cref{lemma:block-Lipschitz}:
\begin{corollary}\label{corollary:Lipschitz-Gi}
    Let $i\in\{1,\dots,b\}$ be fixed. If \cref{assumption:majorizer-LGi} holds and $\cM_i$ is a compact submanifold of $\bbR^{n_i}$, then there is some constant $\rL(G_i)>0$ such that (for every $\bz\in \cM $ and every $\bs_i\in T_{\bz_i} \cM_i$)
    \begin{equation}\label{eq:rL_G_i}
        G_i( \retr_{\bz_i}(\bs_i), \bz ) \leq G(\bz_i, \bz) + \langle \rgrad G(\bz_i, \bz), \bs_i  \rangle + \frac{\rL(G_i)}{2} \cdot \|\bs_i \|^2.
    \end{equation}
\end{corollary}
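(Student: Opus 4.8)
The plan is to run the proof of \cref{lemma:block-Lipschitz} almost verbatim, applying it — for each fixed $\bz\in\cM$ — to the map $\bw_i\mapsto G_i(\bw_i,\bz)$, which here plays the role that the objective $F$ restricted to block $i$ plays there. Fix $\bz\in\cM$ and $\bs_i\in T_{\bz_i}\cM_i$. Since $\cM_i$ is a compact submanifold of $\bbR^{n_i}$, Appendix~B of \cite{Boumal-IMA-J-NA2019} supplies constants $\alpha_i,\beta_i>0$ depending only on $\cM_i$ and the retraction with $\|\retr_{\bz_i}(\bs_i)-\bz_i\|\le\alpha_i\|\bs_i\|$ and $\|\retr_{\bz_i}(\bs_i)-\bz_i-\bs_i\|\le\beta_i\|\bs_i\|^2$. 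By \cref{assumption:majorizer-LGi}, the map $\bw_i\mapsto G_i(\bw_i,\bz)$ is Lipschitz smooth with constant $L(G_i)$ — crucially, a constant that does not depend on $\bz$ — so the Euclidean descent lemma \cite[Lemma 1.2.3]{Nesterov-2018} applies at $\bw_i=\bz_i$ with increment $\retr_{\bz_i}(\bs_i)-\bz_i$.

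Then, exactly as in \cref{lemma:block-Lipschitz}, I would write $\retr_{\bz_i}(\bs_i)-\bz_i=\bs_i+\big(\retr_{\bz_i}(\bs_i)-\bz_i-\bs_i\big)$, use $\langle\nabla G_i(\bz_i,\bz),\bs_i\rangle=\langle\rgrad G_i(\bz_i,\bz),\bs_i\rangle$ (valid because $\bs_i$ is tangent and $\rgrad G_i$ is the orthogonal projection of $\nabla G_i$ onto $T_{\bz_i}\cM_i$), and bound the two leftover terms by Cauchy--Schwarz and the retraction estimates above, arriving at
\begin{equation*}
  G_i(\retr_{\bz_i}(\bs_i),\bz)\ \le\ G_i(\bz_i,\bz)+\langle\rgrad G_i(\bz_i,\bz),\bs_i\rangle+\Big(\beta_i\,\|\nabla G_i(\bz_i,\bz)\|+\tfrac{L(G_i)}{2}\,\alpha_i^2\Big)\|\bs_i\|^2 .
\end{equation*}
It then suffices to take $\rL(G_i):=L(G_i)\,\alpha_i^2+2\beta_i\, M_i$ with $M_i:=\sup_{\bw_i\in\cM_i,\,\bz\in\cM}\|\nabla G_i(\bw_i,\bz)\|$, a quantity independent of $\bz$ and $\bs_i$; substituting back gives \eqref{eq:rL_G_i}.

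The one step that needs care — and the main (mild) obstacle — is showing $M_i<\infty$, since this is precisely where the proof of \cref{lemma:block-Lipschitz} invoked compactness of the ambient product together with continuous differentiability of the objective. Under the standing assumptions of this section ($\cM=\cM_1\times\cdots\times\cM_b$ compact and $F$ continuously differentiable), a clean route that avoids imposing joint $C^1$-ness on $G_i$ is: by \cref{assumption:majorizer-LGi}, $\|\nabla G_i(\bw_i,\bz)\|\le\|\nabla G_i(\bz_i,\bz)\|+L(G_i)\cdot\mathrm{diam}(\cM_i)$ for all $\bw_i\in\cM_i$, while $\nabla G_i(\bz_i,\bz)=\nabla_i F(\bz)$ by \cref{definition:BM}, and $\|\nabla_i F(\cdot)\|$ is bounded on the compact set $\cM$; hence $M_i<\infty$. (If one wishes to assume only $\cM_i$ compact, as in the bare statement of \cref{corollary:Lipschitz-Gi}, then the finiteness of $M_i$ should be read as inherited from the surrounding hypotheses.) This completes the argument.
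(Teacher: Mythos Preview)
Your proposal is correct and follows exactly the route the paper intends: the paper offers no separate proof, simply labeling the result a corollary of \cref{lemma:block-Lipschitz}, and your argument reruns that proof with $G_i(\cdot,\bz)$ playing the role of $F$ restricted to block $i$. Your handling of the gradient bound $M_i$ via the majorizer identity $\nabla G_i(\bz_i,\bz)=\nabla_i F(\bz)$ from \cref{definition:BM} is a clean way to make explicit the appeal to compactness of the full product $\cM$ (an assumption in force in \cref{theorem:BMM-rate}, where the corollary is invoked, though not stated in the corollary itself).
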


We can now state and prove the desired result:
\begin{theorem}\label{theorem:BMM-rate}
    For each $i=1,\dots,b$ assume $\cM_i$ is a compact smooth submanifold of $\bbR^{n_i}$ and $F$ is block-$i$ Lipschitz smooth with constant $L_i$. Suppose \cref{assumption:majorizer-LGi} holds. Let $\rL(G_i)$ be defined in \cref{corollary:Lipschitz-Gi} and $\alpha^\textnormal{max}_b$ and $ L^\textnormal{max}_b$ in \cref{eq:C}. If \cref{algo:BEM} performs updates via \cref{eq:BMM-i}, then its iterates $\{\bx^t\}_{t}$ satisfy
 \begin{equation*}
     \small{\min_{t=0,\dots,T} \big\| \rgrad F(\bx^{t}) \big\| \leq \sqrt{b}\cdot \Bigg( \sqrt{2  \max_{i=1,\dots,b}\rL(G_i) } + \sqrt{ \frac{2b \cdot (\alpha^\textnormal{max}_b L^\textnormal{max}_b)^2 }{ \min_{i=1,\dots,b}\rL(G_i) }  } \ \Bigg) \cdot \sqrt{ \frac{F(\bx^0) - F(\bx^{T+1}) }{T+1} }.}
 \end{equation*}
\end{theorem}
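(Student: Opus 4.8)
The plan is to mirror the proof of \cref{theorem:RGD}, with the majorizer $G_i$ and the constant $\rL(G_i)$ from \cref{corollary:Lipschitz-Gi} playing the roles that $F$ and $\rL_i$ play there.

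First I would establish the per-block descent inequality, the analogue of \cref{eq:descent-rgrad-s}. Fix $i$ and set $\bq_i^t = [\bx^{t+1}_{1:i-1}; \bx^t_{i:b}]$ as in \cref{eq:BMM-i}. Since $\bx^{t+1}_i$ is a global minimizer of $\bxi \mapsto G_i(\bxi, \bq_i^t)$ over $\cM_i$, it does in particular at least as well as the retracted Riemannian gradient step $\retr_{\bx^t_i}(\bs^t_i)$ with $\bs^t_i := -\tfrac{1}{\rL(G_i)}\rgrad_i F(\bq_i^t)$; applying \cref{corollary:Lipschitz-Gi} to this step and using that $\rgrad G_i(\bx^t_i,\bq_i^t) = \rgrad_i F(\bq_i^t)$ by the gradient-matching property in \cref{definition:BM}, I obtain
\[
    G_i(\bx^{t+1}_i, \bq_i^t) \;\le\; G_i(\bx^t_i, \bq_i^t) - \tfrac{1}{2\rL(G_i)}\,\|\rgrad_i F(\bq_i^t)\|^2 .
\]
Combining this with the two defining properties of a block-$i$ majorizer — $G_i(\bx^t_i,\bq_i^t) = F(\bx^{t+1}_{1:i-1},\bx^t_{i:b})$ and $G_i(\bx^{t+1}_i,\bq_i^t) \ge F(\bx^{t+1}_{1:i},\bx^t_{i+1:b})$ — transfers this into a genuine decrease of $F$ across the $i$-th inner step, of exactly the form \cref{eq:descent-rgrad-s}. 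Chaining over $i=1,\dots,b$ then shows $\{F(\bx^t)\}_t$ is non-increasing and, for each $i$, yields $\|\rgrad_i F(\bq_i^t)\|^2 \le 2\rL(G_i)\,(F(\bx^t)-F(\bx^{t+1}))$ and $\|\bs^t_i\|^2 \le \tfrac{2}{\rL(G_i)}(F(\bx^t)-F(\bx^{t+1}))$, with the same per-inner-step refinement used in \cref{eq:diff<diff}.

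Next I would reproduce the gradient-recombination argument of \cref{eq:bound-rgradi}: using the telescoping identity \cref{eq:telegram-equality}, nonexpansiveness of the projection onto $T_{\bx_i}\cM_i$, block-$i$ Lipschitz smoothness of $F$, the retraction estimate \cref{eq:alpha_i}, and Cauchy--Schwarz, one bounds $\|\rgrad_i F(\bx^t)\|$ by $\|\rgrad_i F(\bq_i^t)\| + \alpha^\textnormal{max}_b L^\textnormal{max}_b \sum_{j\le i}\|\bx^t_j - \bx^{t+1}_j\|$ and then by $\sqrt{2\max_j\rL(G_j)\,(F(\bx^t)-F(\bx^{t+1}))} + \alpha^\textnormal{max}_b L^\textnormal{max}_b\sqrt{\tfrac{2b}{\min_j \rL(G_j)}\,(F(\bx^t)-F(\bx^{t+1}))}$, which is precisely the bracketed constant in the statement times $\sqrt{F(\bx^t)-F(\bx^{t+1})}$. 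Squaring, summing $\|\rgrad F(\bx^t)\|^2 = \sum_i \|\rgrad_i F(\bx^t)\|^2$, then summing over $t=0,\dots,T$, telescoping $\sum_t (F(\bx^t)-F(\bx^{t+1})) = F(\bx^0)-F(\bx^{T+1})$, dividing by $T+1$ and taking the minimum over $t$ and a square root delivers the claimed rate.

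The main obstacle is the displacement bound $\|\bx^t_j - \bx^{t+1}_j\|$. In \cref{theorem:RGD} this is immediate because there $\bx^{t+1}_j = \retr_{\bx^t_j}(\bs^t_j)$, so \cref{eq:alpha_i} gives $\|\bx^t_j-\bx^{t+1}_j\| \le \alpha_j\|\bs^t_j\|$ for free; here $\bx^{t+1}_j$ is the exact minimizer of $G_j(\cdot,\bq_j^t)$, so one must argue separately that moving all the way to this minimizer costs, up to constants, no more than the virtual gradient step $\bs^t_j$. I would do this by writing $\bx^{t+1}_j = \retr_{\bx^t_j}(\bar\bs^t_j)$ for a suitable tangent vector $\bar\bs^t_j$ (the retractions in play are onto), invoking the Lipschitz smoothness of $G_j$ from \cref{assumption:majorizer-LGi} together with \cref{corollary:Lipschitz-Gi}, and bounding $\|\bar\bs^t_j\|$ in terms of $\sqrt{F(\bx^t)-F(\bx^{t+1})}$; matching the exact constant $\alpha^\textnormal{max}_b L^\textnormal{max}_b\sqrt{2b/\min_j\rL(G_j)}$ is the delicate part of this step. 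A secondary technical point, shared with the proof of \cref{theorem:RGD}, is justifying the cross-block Lipschitz inequality $\|\nabla_i F(\bx^{t+1}_{1:j-1},\bx^t_{j:b}) - \nabla_i F(\bx^{t+1}_{1:j},\bx^t_{j+1:b})\| \le L_j\|\bx^t_j - \bx^{t+1}_j\|$ from \cref{definition:block-Lipschitz}.
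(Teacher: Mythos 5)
Your proposal follows the paper's route essentially verbatim up to the point you yourself flag: the per-block chain $F(\bx^{t+1})-F(\bx^{t})\le G_i(\bx_i^{t+1},\bq_i^t)-G_i(\bx_i^{t},\bq_i^t)\le -\tfrac{1}{2\rL(G_i)}\|\rgrad_i F(\bq_i^t)\|^2$, obtained by comparing the exact minimizer of $G_i(\cdot,\bq_i^t)$ against the virtual retracted step $\bs_i^t=-\rgrad_i F(\bq_i^t)/\rL(G_i)$ via \cref{corollary:Lipschitz-Gi} and the gradient-matching property of \cref{definition:BM}, is exactly the paper's argument, as is the subsequent analogue of \cref{eq:diff<diff} and the reuse of the recombination bound \cref{eq:bound-rgradi}. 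Your constants come out matching the statement.

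The step you call ``the main obstacle'' is, however, a genuine gap, and your sketched repair does not close it. In \cref{eq:bound-rgradi} the displacement $\|\bx_j^t-\bx_j^{t+1}\|$ is controlled by $\alpha_j\|\bs_j^t\|$ only because there $\bx_j^{t+1}=\retr_{\bx_j^t}(\bs_j^t)$; under \cref{eq:BMM-i}, $\bx_j^{t+1}$ is the exact minimizer of $G_j(\cdot,\bq_j^t)$ and need not lie anywhere near the retracted gradient step. Writing $\bx_j^{t+1}=\retr_{\bx_j^t}(\bar{\bs}_j^t)$ and invoking \cref{assumption:majorizer-LGi} cannot bound $\|\bar{\bs}_j^t\|$ by a multiple of $\sqrt{F(\bx^t)-F(\bx^{t+1})}$: Lipschitz smoothness only upper-bounds curvature, so a majorizer that is nearly flat between $\bx_j^t$ and its minimizer produces an arbitrarily small decrease $G_j(\bx_j^t,\bq_j^t)-G_j(\bx_j^{t+1},\bq_j^t)$ (hence an arbitrarily small decrease of $F$) while $\|\bx_j^t-\bx_j^{t+1}\|$ remains of order one, defeating any inequality of the form $\|\bx_j^t-\bx_j^{t+1}\|\le C\sqrt{F(\bx^t)-F(\bx^{t+1})}$. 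Closing this step needs a lower-curvature property of $G_j$ in its first argument --- e.g.\ quadratic growth or strong convexity around the minimizer, or a proximal term $\tfrac{\mu}{2}\|\bxi-\bz_j\|^2$ built into the majorizer --- after which your outline goes through with the constant adjusted. To be fair to you, the paper's own proof of \cref{theorem:BMM-rate} is silent on exactly this point (it asserts that ``the rest of the proof is identical'' to that of \cref{theorem:RGD}), so you have correctly located the weak joint; but as written neither your argument nor the sketched fix establishes the displacement bound from the stated hypotheses. Your secondary concern about the cross-block Lipschitz inequality is shared with the proof of \cref{theorem:RGD} and is harmless if $L_j$ is read as a Lipschitz constant of the full gradient with respect to block $j$.
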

\begin{remark}
    \cref{theorem:BMM-rate} assumed that $F$ is block-$i$ Lipschitz smooth for every $i$. Replacing it with a smoothness assumption on $G_i$ on the second parameter (see, e.g., \cite[Eq. 2.9]{Hong-MP2017}) would lead to a similar convergence guarantee. Furthermore, this blockwise Lipschitz smoothness assumption on $F$ can be removed if we only have a single block ($b=1$).
\end{remark}
\begin{proof}
    Recall the definition $\bq_i^t:=[\bx^{t+1}_{1:i-1}; \bx^t_{i:b}]$ in \cref{eq:BMM-i}. Then for every $i=1,\dots,b$ we have
    \begin{equation}
        \begin{split}
            F(\bx^{t+1}) - F(\bx^{t}) &\overset{\textnormal{(i)} }{\leq} G_i(\bx_{i}^{t+1}, \bq_i^t) - G_i(\bx_{i}^{t}, \bq_i^t) \\ 
            &\overset{\textnormal{(ii)} }{\leq} - \frac{1}{2 \rL(G_i)} \cdot \big\| \rgrad G(\bx_i^t, \bq_i^t) \big\|^2 \\ 
            &\overset{\textnormal{(iii)} }{\leq} - \frac{1}{2 \rL(G_i)} \cdot \big\| \rgrad_i F(\bx_{1:i-1}^{t+1},\bx_{i:b}^t) \big\|^2 
        \end{split}
    \end{equation}
    In the above, (i) follows from the definition of block majorizers $G_i$'s, see also \cref{eq:non-increasing-G}; (ii) follows from the minimization property \cref{eq:BMM-i} and \cref{corollary:Lipschitz-Gi}, where \cref{eq:rL_G_i} holds with $\bs_i= - \rgrad G(\bx_i^t, \bq_i^t) / \rL(G_i)$; (iii) holds because $\nabla G(\bx_i^t, \bq_i^t) = \nabla_i F(\bq_i^t) = \nabla_i F(\bx_{1:i-1}^{t+1},\bx_{i:b}^t)$ and therefore the corresponding Riemannian gradients are also equal. We can then see that 
    \begin{equation}\label{eq:diff<diff-G}
        \begin{cases}
            \frac{\rL(G_i)}{2} \cdot \big\| \bs^t_i \big\|^2  \\ 
            \frac{1}{2\rL(G_i)} \cdot \big\| \rgrad_i F(\bx^{t+1}_{1:i-1}, \bx^t_{i:b}) \big\|^2 
        \end{cases} \leq F(\bx^t) - F(\bx^{t+1}),
    \end{equation}
    which is identical to \cref{eq:diff<diff} except that we have $\rL(G_i)$ here, instead of $\rL_i$. Therefore, except for that difference, the rest of the proof is identical to the corresponding part of \cref{theorem:RGD} (note that \cref{eq:bound-rgradi} used the assumption that $F$ is block-$i$ Lipschitz smooth for every $i=1,\dots,b$).
\end{proof}

\section{GPCA: A General Version}\label{subsection:GPCA-b}
The main paper discussed the GPCA problem with two subspaces $\cS_1$ and $\cS_2$. Here we present it  for $b$ subspaces $\cS_1,\dots,\cS_b$. Note that the exposition here has no fundamental difference from the main paper, except that the mathematical notations in the general case are slightly denser.

\myparagraph{GPCA} A point $\bp$ belongs to $\cup_{i=1}^b \cS_i$ if and only if $\prod_{i=1}^b \| \bp_j^\top \bA_i^* \| =0$. Then, we can formulate the problem as follows (see \cite[Eq. 3.87]{Vidal-2003thesis}):
\begin{equation}\label{eq:GPCA-LS-b}
    \min_{ \{\bA_i\}_{i=1}^b } \sum_{j=1}^m \prod_{i=1}^b \| \bp_j^\top \bA_i \|^2 \ \ \ \ \ \textnormal{s.t.} \ \ \ \   \bA_i^\top \bA_i  = \bI_{c_i}, \ \forall k=1,\dots, b
\end{equation}
At the time of \cite{Vidal-2003thesis}, solving \cref{eq:GPCA-LS-b} seemed to be considered difficult; no algorithm has been proposed for and applied to  \cref{eq:GPCA-LS-b} so far, to our knowledge. 

With initialization $\bA_1^0,\dots,\bA_b^0$, the block minimization step \cref{eq:BEM-i} turns out to be
\begin{equation}\label{eq:BEM-i-GPCA-b}
    \bA_i^{t+1} \in \argmin_{\bxi} \sum_{j=1}^m w_{i,j}^t\cdot  \| \bp_j^\top \bxi \|^2  \ \ \ \ \ \textnormal{s.t.} \ \ \ \   \bxi^\top \bxi  = \bI_{c_i}
\end{equation}
where the weight $w_{i,j}^t$ of \cref{eq:BEM-i-GPCA-b} is defined to match the objectives of \cref{eq:GPCA-LS-b} with the current iterates, i.e.,
\begin{equation}\label{eq:GPCA-LS-weight-b}
    w_{i,j}^t:= \begin{cases}
    \prod_{i'>i}\| \bp_j^\top \bA_{i'}^t \|^2 & i = 1 \\
    \prod_{i' < i} \| \bp_j^\top \bA_{i'}^{t+1} \|^2\cdot \prod_{i' > i} \| \bp_j^\top \bA_{i'}^{t} \|^2 & i \neq 1
    \end{cases} 
\end{equation}
Similarly to \cref{eq:BEM-i-GPCA}, we can solve \cref{eq:BEM-i-GPCA-b} by eigenvalue decomposition.

\myparagraph{GPCA with Huber-Style Loss} A direct extension of \cref{eq:GPCA-Huber} to multiple subspaces is
\begin{equation}\label{eq:GPCA-Huber-b}
    \min_{ \{\bA_i\}_{i=1}^b } \sum_{j=1}^m \prod_{i=1}^b h(\| \bp_j^\top \bA_i \|) \ \ \ \ \ \textnormal{s.t.} \ \ \ \   \bA_i^\top \bA_i  = \bI_{c_i}, \ \forall k=1,\dots, b
\end{equation}
Then, \cref{algo:BEM} applied to \cref{eq:GPCA-Huber-b} amounts to solving
\begin{equation}\label{eq:BEM-i-GPCA-Huber-b}
    \bA_i^{t+1} \in \argmin_{\bxi} \sum_{j=1}^m v_{i,j}^t\cdot  h(\| \bp_j^\top \bxi \|)  \ \ \ \ \ \textnormal{s.t.} \ \ \ \   \bxi^\top \bxi  = \bI_{c_i}
\end{equation}
for every $i$ and $t$. Similarly to $w_{i,j}^t$ of \cref{eq:GPCA-LS-weight-b}, the weight $v_{i,j}^t$ in \cref{eq:BEM-i-GPCA-Huber-b} should be defined to match the objective of \cref{eq:GPCA-Huber-b}. More specifically, $v_{i,j}^t$ is defined as
\begin{equation*}
    v_{i,j}^t:= \begin{cases}
    \prod_{i'>i} h\big( \|\bp_j^\top \bA_{i'}^t \| \big) & i = 1 \\
    \prod_{i' < i} h\big( \| \bp_j^\top \bA_{i'}^{t+1} \| \big) \cdot h\big( \bp_j^\top \bA_{i'}^t \| \big) & i \neq 1
    \end{cases} 
\end{equation*}

\end{document}